\documentclass[preprint,11pt]{article}
\usepackage{amssymb}
\usepackage{amsfonts}
\usepackage{mathrsfs}
\usepackage{amssymb,amsfonts,amsmath}
\usepackage{graphicx,graphics,epstopdf}
\usepackage{url}
\graphicspath{{./figures/}}
\usepackage{latexsym}
\usepackage[]{algorithm2e}
\usepackage{esint}
\usepackage{multirow}
\usepackage{ctable} % for \specialrule command
\newcommand{\specialcell}[2][c]{\begin{tabular}[#1]{@{}c@{}}#2\end{tabular}}
\usepackage{subfigure}
\usepackage{times}
\usepackage{multicol,enumerate}
\usepackage{epstopdf}
\usepackage{color}
\usepackage{fullpage}
\usepackage{setspace}
\usepackage[english]{babel}
\usepackage[version=3]{mhchem}
\usepackage{amsmath, amssymb, amsthm}
\usepackage{verbatim, latexsym}
\usepackage{colortbl}
\usepackage{multirow}
\usepackage[OT2,OT1]{fontenc}
\usepackage{float}
\restylefloat{table}
\newcommand\cyr
{
\renewcommand\rmdefault{wncyr}
\renewcommand\sfdefault{wncyss}
\renewcommand\encodingdefault{OT2}
\normalfont
\selectfont
}
\DeclareTextFontCommand{\textcyr}{\cyr}

\def\Xint#1{\mathchoice
{\XXint\displaystyle\textstyle{#1}}%
{\XXint\textstyle\scriptstyle{#1}}%
{\XXint\scriptstyle\scriptscriptstyle{#1}}%
{\XXint\scriptscriptstyle\scriptscriptstyle{#1}}%
\!\int}
\def\XXint#1#2#3{{\setbox0=\hbox{$#1{#2#3}{\int}$ }
\vcenter{\hbox{$#2#3$ }}\kern-.6\wd0}}

\def\dashint{\Xint-}
\newcommand{\roma}{\mathrm{I}}
\newcommand{\romb}{\mathrm{II}}
\newcommand{\romc}{\mathrm{III}}

\newcommand{\etamaxmin}[1]{\eta_{\text{#1}}}

\newcommand{\ti}{\mathrm{T}_i}

\newcommand{\dx}{\,\mathrm{d}x}
\newcommand{\prnt}[1]{\left( #1 \right)}

\newcommand{\norm}[1]{\left\|#1\right\|}

\newcommand{\seminormE}[2]{|#1|_{H^{1}_{\kappa}\prnt{#2}}}
\newcommand{\normL}[2]{\norm{#1}_{L^2\prnt{#2}}}

\newcommand{\normLii}[2]{\norm{#1}_{L^2_{\widetilde{\kappa}^{-1}}\prnt{#2}}}

\newcommand{\innerE}[2]{\langle {#1},{#2}\rangle}

\newcommand{\Cpoin}[1]{{\rm C}_{\mathrm{poin}}(#1)}

\newtheorem{theorem}{Theorem}[section]

\newtheorem{assumption}{Assumption}[section]
\newtheorem{remark}{Remark}[section]

\newtheorem{proposition}{Proposition}[section]
\numberwithin{equation}{section}
\usepackage{soul}
\setstcolor{red}

\title{Edge Multiscale Methods for elliptic problems with heterogeneous coefficients}
\author{Shubin Fu\thanks{Department of Mathematics, Chinese University of Hong Kong, Hong Kong Special Administrative Region. (\texttt{shubinfu89@gmail.com})} \and Eric Chung\thanks{Department of Mathematics, Chinese University of Hong Kong, Hong Kong Special Administrative Region. (\texttt{eric.t.chung@gmail.com})} \and Guanglian Li\thanks{Corresponding anthor. Department of Mathematics, Imperial College London, South Kensington, London SW7 2AZ,
UK. (\texttt{lotusli0707@gmail.com}, \texttt{guanglian.li@imperial.ac.uk}). GL acknowledges the
support from the Royal Society through a Newton international fellowship. Part of this work was done while EC and GL visited Erwin Schr\"{o}dinger International Institute for Mathematics and Physics (ESI, Vienna) for the research program: Numerical Analysis of Complex PDE Models in the Sciences.}
}
\begin{document}
\maketitle
\begin{abstract}
In this paper, we proposed two new types of edge multiscale methods motivated by \cite{GL18} to solve Partial Differential Equations (PDEs) with high-contrast heterogeneous coefficients: Edge spectral multiscale Finte Element method (ESMsFEM) and Wavelet-based edge multiscale Finite Element method (WEMsFEM). Their convergence rates for elliptic problems with high-contrast heterogeneous coefficients are demonstrated in terms of the coarse mesh size $H$, the number of spectral basis functions and the level of the wavelet space $\ell$, which are verified by extensive numerical tests.

\noindent{\bf Keywords:}
multiscale, heterogeneous, edge, high-contrast, Steklov eigenvalue, wavelets
\end{abstract}

\section{Introduction}
The accurate mathematical modeling of many important applications, e.g., composite materials, porous media and reservoir simulation, involves elliptic problems with heterogeneous coefficients. In order to adequately describe the intrinsic complex properties in practical scenarios, the heterogeneous coefficients can have
both multiple inseparable scales and high-contrast. Due to this disparity of scales, the classical numerical treatment becomes prohibitively expensive
and even intractable for many multiscale applications. Nonetheless, motivated by the broad spectrum of practical applications, a large number of multiscale model reduction techniques, e.g., multiscale finite element methods (MsFEMs),
heterogeneous multiscale methods (HMMs), variational multiscale methods, flux norm approach, generalized multiscale finite element methods (GMsFEMs) and localized orthogonal decomposition (LOD), have been proposed in the literature \cite{MR1455261,MR1979846,MR1660141,MR2721592, egh12, MR3246801, li2017error} over
the last few decades. They have achieved great success in the efficient and accurate simulation of heterogeneous problems. Amongst these numerical methods, the GMsFEMs \cite{egh12} have demonstrated extremely promising numerical results for a wide variety of problems, and thus they are becoming increasingly popular.

However, the mathematical understanding of GMsFEMs remains largely missing, despite numerous successful empirical evidences. Recently, the author in \cite{GL18} provided a first mathematical justification without any restrictive assumptions or oversampling technique by representing the solution restricted on each local domain as a summation of three parts, and then approximating rigorously each component by means of precalculated multiscale basis functions, namely, a specific multiscale basis function, local multiscale basis functions over the local domain and over the coarse edges. One of the critical challenges in \cite{GL18} is to make every estimate independent of the heterogeneity in the coefficient, e.g., the mulple scales and large deviation of values. As proved in \cite{GL18}, among the three types of multiscale basis functions to approximate each component of the solution over each local region, the local multiscale basis functions over the coarse edges play a critical role. Its energy error estimate poses a certain difficulty in the proof, which relies mainly on the regularity properties \cite{chu2010new,li2017low} of the high-contrast problems and the transposition method \cite{MR0350177}. In particular, the approximation property of the solution over the coarse edges determines the approximation of the solution in energy norm.

Motivated by this result, we propose two types of edge multiscale Finite Element methods in Section \ref{sec:edge} to solve PDEs with high-contrast heterogeneous coefficients: Edge Spectral Multiscale Finite Element Method (ESMsFEM) and Wavelet-based Edge Multiscale Finite Element Method (WEMsFEM). The edge spectral multiscale basis functions and the wavelets (e.g., Haar wavelets and hierarchical bases) \cite{MR1162107,Yserentant1986} are utilized to approximate the trace of the solution over each coarse edge, correspondingly. On the one hand, due to the large variations and discontinuities in the heterogeneous coefficients, this gives arise to singular behavior and the solution owns very low regularity in certain regions of the computational domain. On the other hand, the wavelets are capable of approximating functions with very low regularities and their approximation properties are reflected or characterized by the size of the finest level. Moreover, the hierarchical structure intrinsically built in the wavelets makes the wavelets excellent candidates to approximate functions with low regularities. For this reason, we apply the wavelets as the basis functions on the edges. In addition, we derive the energy error estimates for each approach and present several numerical tests in 2-dimension and 3-dimension to demonstrate the accuracy of our new proposed methods.

This work is not the first one to apply ideas from wavelets to approximating multiscale partial differential equations. The authors in \cite{engquist2002wavelet} proposed a projection-based numerical homogenization scheme which utilizes different levels of wavelet spaces as the coarse space and the fine space. In specific, this procedure involves global correction operators over the computational domain, and the wavelets are utilized to approximate the solution directly. Recently, wavelets are applied to derive an orthogonal decomposition of the solution \cite{owhadi2015bayesian}, which again, approximate the solution on the global or localized domain directly. To the best of our knowledge, this paper represents the first one, where the wavelets are introduced to approximate the trace of the solution over each coarse edge. Because of this, there is no further localization technique required in our methods. 

The remainder of the paper is arranged as below. We formulate in Section \ref{sec:prelim} the heterogeneous elliptic problem and the main idea of GMsFEMs. Then we present the basic notation and approximation properties of Haar wavelets and hierarchical bases in Section \ref{sec:wavelets}. This is then followed by Section \ref{sec:edge} dealing with two novel edge multiscale methods, which are the key findings of this paper.
Their theoretical and numerical performance are presented in Sections \ref{sec:error} and \ref{sec:numer}. Finally, we conclude the paper with several remarks in Section \ref{sec:conclusion}.

\section{Preliminaries}\label{sec:prelim}
We first formulate the heterogeneous elliptic problem to present our new multiscale methods. Let $D\subset
\mathbb{R}^d$ ($d=1,2,3$) be an open bounded Lipschitz domain {with a boundary $\partial D$}. We seek a function $u\in V:=H^{1}_{0}(D)$ such that
\begin{equation}\label{eqn:pde}
\begin{aligned}
\mathcal{L}u:=-\nabla\cdot(\kappa\nabla u)&=f &&\quad\text{ in }D,\\
u&=0 &&\quad\text{ on } \partial D,
\end{aligned}
\end{equation}
where the force term $f\in L^2(D)$ and the permeability coefficient $\kappa\in L^{\infty}(D)$ with $\alpha\leq\kappa(x)
\leq\beta$ almost everywhere for some lower bound $\alpha>0$ and upper bound $\beta>\alpha$. We denote by $\Lambda:=
\frac{\beta}{\alpha}$ the ratio of these bounds, {which reflects the contrast of the coefficient $\kappa$}. Note that
the existence of multiple scales in the coefficient $\kappa$ rends directly solving Problem \eqref{eqn:pde} challenging, since
resolving the problem to the finest scale would incur huge computational cost.

Now we present basic facts related to Problem \eqref{eqn:pde} and briefly describe the GMsFEM (and also to fix the notation).
Let the space $V:=H^{1}_{0}(D)$ be equipped with the (weighted) inner product
\begin{align*}
\innerE{v_1}{v_2}_{D}=:a(v_1,v_2):=\int_{D}\kappa\nabla v_1\cdot\nabla v_2\dx\quad \text{ for all } v_1, v_2\in V,
\end{align*}
and the associated energy norm
\begin{align*}
\seminormE{v}{D}^2:=\innerE{v}{v}_{D}\quad \text{ for all } v\in V.
\end{align*}
We denote by $W:=L^2(D)$ equipped with the usual norm $\normL{\cdot}{D}$ {and inner product $(\cdot,\cdot)_{D}$}.

The weak formulation for problem \eqref{eqn:pde} is to find $u\in V$ such that
\begin{align}\label{eqn:weakform}
a(u,v)=(f,v)_{D} \quad \text{for all
} v\in V.
\end{align}
The Lax-Milgram theorem implies the well-posedness of problem \eqref{eqn:weakform}.

To discretize problem \eqref{eqn:pde}, we first introduce fine and coarse grids.
Let $\mathcal{T}_H$ be a regular partition of the domain $D$ into
finite elements (triangles, quadrilaterals, tetrahedral, etc.) with a mesh size $H$. We refer to
this partition as coarse grids, and accordingly the coarse elements. Then each coarse element is further partitioned
into a union of connected fine grid blocks. The fine-grid partition is denoted by
$\mathcal{T}_h$ with $h$ being its mesh size. Over the fine mesh $\mathcal{T}_h$, let $V_h$ be the conforming piecewise
linear finite element space:
\[
V_h:=\{v\in \mathcal{C}: V|_{T}\in \mathcal{P}_{1} \text{ for all } T\in \mathcal{T}_h\},
\]
where $\mathcal{P}_1$ denotes the space of linear polynomials. Then the fine-scale solution $u_h\in V_h$ satisfies
\begin{align}\label{eqn:weakform_h}
a(u_h,v_h)=(f,v_h)_{D} \quad \text{ for all } v_h\in V_h.
\end{align}
%The Galerkin orthogonality implies the following optimal estimate in the energy norm:
%\begin{align}\label{eq:fineApriori}
%\seminormE{u-u_h}{D}\leq \min\limits_{v_h\in V_h}\seminormE{u-v_h}{D}.
%\end{align}
The fine-scale solution $u_h$ will serve as a reference solution in Section \ref{sec:numer}. Note that due to the presence of multiple scales in the coefficient $\kappa$, the fine-scale mesh size $h$ should be commensurate with the smallest scale and thus it can be very small in order to obtain an accurate solution. This necessarily involves huge computational complexity, and more efficient methods are in great demand.

%where ${C}(D)$ denotes a positive constant that depends on the domain $D$ only.
%Generally, the notation $C$ denotes a generic constant which may change at
%each occurrence, which however is always independent of the mesh sizes $h$
%and $H$ and the contrast $\Lambda$.
%[{\color{red}why is this estimate true ? It should depend on the contrast ?}]

In this work, we are concerned with flow problems with high-contrast heterogeneous coefficients,
which involve multiscale permeability fields, e.g., permeability fields with vugs and faults, and
furthermore, can be parameter-dependent, e.g., viscosity. Under such scenario, the computation of the
fine-scale solution $u_h$ is vulnerable to high computational
complexity, and one has to resort to multiscale methods. The GMsFEM has been extremely successful
for solving multiscale flow problems, which we briefly recap below. % This paper is concerned with the convergence rate of GMsFEM.

%Below we describe briefly the GMsFEM.
The GMsFEM aims at solving Problem \eqref{eqn:pde} on the coarse mesh $\mathcal{T}_{H}$
cheaply, which, meanwhile, maintains a certain accuracy compared to the fine-scale solution $u_h$. To describe the
GMsFEM, we need a few notation. The vertices of $\mathcal{T}_H$
are denoted by $\{O_i\}_{i=1}^{N}$, with $N$ being the total number of coarse nodes.
The coarse neighborhood associated with the node $O_i$ is denoted by
\begin{equation} \label{neighborhood}
\omega_i:=\bigcup\{ K_j\in\mathcal{T}_H: ~~~ O_i\in \overline{K}_j\}.
\end{equation}
%The overlapping constant $\Cov$ is defined by
%\begin{align}\label{eq:overlap}
%\Cov:=\max\limits_{K\in \mathcal{T}_{H}}\#\{O_i: K\subset\omega_i \text{ for } i=1,2,\cdots,N\}.
%\end{align}
We refer to Figure~\ref{schematic} for an illustration of neighborhoods and elements subordinated to the coarse
discretization $\mathcal{T}_H$. Throughout, we use $\omega_i$ to denote a coarse neighborhood.
%and $K$ to denote a coarse element throughout the paper.

\begin{figure}[htb]
  \centering
  \includegraphics[width=0.65\textwidth]{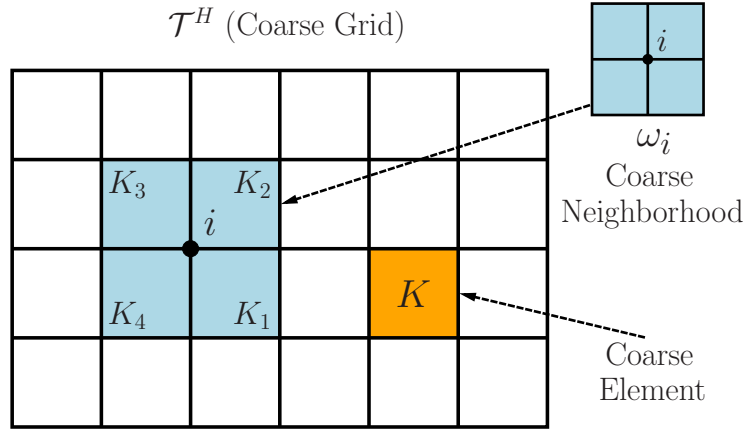}
  \caption{Illustration of a coarse neighborhood and coarse element.}
  \label{schematic}
\end{figure}

Next, we outline the GMsFEM with a conforming Galerkin (CG) formulation. Let $1\leq i\leq N$ be a certain coarse node. Note that $\omega_i$ is
the support of the multiscale basis functions to be identified, and $\ell_i\in \mathbb{N}_{+}$ is the number of those multiscale basis functions associated with $\omega_i$. They are denoted as $\psi_k^{\omega_i}$ for $k=1,\cdots,\ell_i$.  Throughout, the subscript $i$ denotes the $i$-th coarse node or coarse neighborhood.
Generally, the GMsFEM utilizes multiple basis functions per coarse neighborhood $\omega_i$,
and the index $k$ represents the numbering of these basis functions.
In turn, the CG multiscale solution $u_{\text{ms}}$ is sought as $u_{\text{ms}}=\sum\limits_{i=1}^{N}\sum\limits_{k=1}^{\ell_i} c_{k}^i \psi_{k}^{\omega_i}$.
Once the basis functions $\psi_k^{\omega_i}$ are identified, the CG global coupling is given through the variational form
\begin{equation}\label{eq:multiscale}
a(u_{\text{ms}},v)=(f,v)_{D}, \quad \text{for all} \, \, v\in
V_{\text{off}},
\end{equation}
where $V_{\text{off}}$ denotes the multiscale space spanned by these multiscale basis functions.

We conclude the section with the following assumption on the computational domain $D$ and the heterogeneous coefficient $\kappa$.
\begin{assumption}[Structure of $D$ and $\kappa$]\label{ass:coeff}
Let $D$ be a domain with a $C^{1,\alpha}$ $(0<\alpha<1)$ boundary $\partial D$,
and $\{D_i\}_{i=1}^m\subset D$ be $m$ pairwise disjoint strictly convex open subsets, {each with a $C^{1,\alpha}$ boundary
$\Gamma_i:=\partial D_i$,} and denote $D_0=D\backslash \overline{\cup_{i=1}^{m} D_i}$. %Furthermore, there
%exists an open set $\omega\subset D$, such that
%$\cup_{i=1}^{m} D_i\subset \omega$ and $\text{dist}(\partial \omega, \partial D)\ge \tau$, for some $\tau>0$.
Let the permeability coefficient $\kappa$ be piecewise regular function defined by
\begin{equation}
\kappa=\left\{
\begin{aligned}
&\eta_{i}(x) &\text{ in } D_{i},\\
&1 &\text{ in }D_0.
\end{aligned}
\right.
\end{equation}
Here $\eta_i\in C^{\mu}(\bar{D_i})$ with $\mu\in (0,1)$ for $i=1,\cdots,m$. Denote  $\etamaxmin{min}:=\min_{i}\{\eta_i\}\geq 1$ and $\etamaxmin{max}:=\max_{i}\{\eta_i\}$.
\end{assumption}

Under Assumption \ref{ass:coeff}, the coefficient $\kappa$ is $\Gamma$-{\em quasi-monotone} on each coarse neighborhood $\omega_i$ and the global domain $D$
(see \cite[Definition 2.6]{pechstein2012weighted} for the precise definition) with either $\Gamma:=\partial \omega_i$
or $\Gamma:=\partial D$. Then the following weighted Friedrichs inequality \cite[Theorem 2.7]{pechstein2012weighted} holds.
\begin{theorem}[Weighted Friedrichs inequality]\label{thm:friedrichs}
Let $\text{diam}(D)$ be the diameter of the bounded domain $D$ and $\omega_i\subset D$. Define
\begin{align}
\Cpoin{\omega_i}&:=H^{-2}\max\limits_{w\in H^1_0(\omega_i)}\frac{\int_{\omega_i}{\kappa}w^2\dx}{\int_{\omega_i}\kappa|\nabla w|^2\dx},\label{eq:poinConstant}\\
\Cpoin{D}&:=\text{diam}(D)^{-2}\max\limits_{w\in H^1_0(D)}\frac{\int_{D}{\kappa}w^2\dx}{\int_{D}\kappa|\nabla w|^2\dx}.\label{eq:poinConstantG}
\end{align}
Then the positive constants $\Cpoin{\omega_i}$ and $\Cpoin{D}$ are independent of the contrast of $\kappa$.
\end{theorem}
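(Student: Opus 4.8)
The plan is to derive both inequalities as special cases of the general weighted Friedrichs inequality of Pechstein and Scheichl \cite[Theorem 2.7]{pechstein2012weighted}. That theorem asserts that for a piecewise‑regular weight which is quasi‑monotone with respect to a piece $\Gamma$ of the boundary, the Friedrichs constant is controlled solely by the space dimension, the shape of the domain, and the combinatorics of the quasi‑monotonicity chains --- never by the actual size ratios of the weight. So the real work is to verify that, under Assumption \ref{ass:coeff}, the coefficient $\kappa$ is $\Gamma$‑quasi‑monotone with $\Gamma=\partial\omega_i$ on every coarse neighborhood $\omega_i$ and with $\Gamma=\partial D$ on $D$, in the sense of \cite[Definition 2.6]{pechstein2012weighted}, and that the geometric data realizing it do not deteriorate as the contrast $\Lambda$ grows.

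To do this I would describe the island decomposition entering \cite[Definition 2.6]{pechstein2012weighted}. On $D$ the pieces are the background $D_0$, on which $\kappa\equiv 1$, and the inclusions $D_1,\dots,D_m$, on which $\kappa=\eta_i$ with $\eta_i\geq\etamaxmin{min}\geq 1$. Since the $D_i$ are pairwise disjoint, strictly convex, and have $C^{1,\alpha}$ boundaries $\Gamma_i=\partial D_i\subset\overline{D_0}$, the set $D_0$ is connected, touches $\partial D$, and shares with each $D_i$ an interface $\Gamma_i$ of positive $(d-1)$‑dimensional measure. One then orders the islands as prescribed by \cite[Definition 2.6]{pechstein2012weighted}, beginning with $D_0$ (which is anchored to $\partial D$) and attaching each $D_i$ across its interface $\Gamma_i$ with $D_0$; this yields, for each island, an admissible chain terminating on $\partial D$. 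For the local statement the same reasoning is carried out inside $\omega_i$: each nonempty intersection $D_j\cap\omega_i$ is either surrounded within $\omega_i$ by $D_0\cap\omega_i$ --- hence joined to it across a portion of $\Gamma_j$ of positive surface measure --- or it already abuts $\partial\omega_i$, and in either case $D_0\cap\omega_i$, which meets $\partial\omega_i$, together with the inclusion pieces forms the required chain.

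With quasi‑monotonicity in hand, \cite[Theorem 2.7]{pechstein2012weighted} applied on $\omega_i$ and on $D$ furnishes weighted Friedrichs inequalities whose constants depend only on $d$, on the shape‑regularity of $\omega_i$ (respectively on the $C^{1,\alpha}$ character of $\partial D$), and on the number and geometry of the inclusions, but not on $\etamaxmin{min}$, $\etamaxmin{max}$, or $\Lambda$; after rescaling to the diameter $\sim H$ of $\omega_i$ and to $\text{diam}(D)$ these become exactly the estimates encoded by \eqref{eq:poinConstant}--\eqref{eq:poinConstantG}, so that $\Cpoin{\omega_i}$ and $\Cpoin{D}$ are finite and independent of the contrast. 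I expect the principal obstacle to be the faithful matching of the above construction with \cite[Definition 2.6]{pechstein2012weighted} together with the uniformity of the constants it produces: one must check that the ordering of the islands and the role of $\partial\omega_i$ (respectively $\partial D$) obey the definition verbatim, and that the geometric quantities controlling the final bound --- the interface measures $|\Gamma_j\cap\omega_i|$, the shape‑regularity of $D_0\cap\omega_i$, and the regularity of the interfaces --- stay bounded away from degeneracy uniformly in $i$ and independently of $\Lambda$. The strict convexity and $C^{1,\alpha}$‑regularity of the $D_i$ and of $\partial D$ in Assumption \ref{ass:coeff}, and the shape‑regularity of the coarse mesh $\mathcal{T}_H$, are precisely what guarantee this uniformity.
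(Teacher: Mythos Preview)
Your proposal is correct and follows essentially the same route as the paper: the paper does not give an independent proof but simply notes that Assumption~\ref{ass:coeff} makes $\kappa$ $\Gamma$-quasi-monotone on each $\omega_i$ and on $D$ (with $\Gamma=\partial\omega_i$ or $\Gamma=\partial D$) and then cites \cite[Theorem~2.7]{pechstein2012weighted}. Your write-up is more explicit about how the island decomposition and chains are built, but the argument is the same.
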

\begin{remark}
Below we only require that the constants $\Cpoin{\omega_i}$ and $\Cpoin{D}$ be independent
of the contrast in $\kappa$. Assumption \ref{ass:coeff} is one sufficient condition to ensure this,
and it can be relaxed \cite{pechstein2012weighted}.
\end{remark}

\section{Hierarchical subspace splitting over $I=:[0,1]$}\label{sec:wavelets}
In this section, we introduce two types of wavelets on the unit interval $I:=[0,1]$: Haar wavelets and hierarchical bases . They facilitate hierarchically splitting the space $L^2(I)$.
\subsection{Haar wavelets}
Let the level parameter and the mesh size be $\ell$ and $h_{\ell}:=2^{-\ell}$ with $\ell\in \mathbb{N}$, respectively. Then the grid points on level $\ell$ are
\[
x_{\ell,j}=j\times h_{\ell},\qquad 0\leq j\leq 2^{\ell}.
\]
Let the scaling function $\phi(x)$ and the mother wavelet $\psi(x)$ be given by
\begin{equation*}
\phi(x)=
\left\{
\begin{aligned}
&1, &&\text{ if } 0\leq x\leq 1,\\
&0, &&\text{ otherwise,}
\end{aligned}
\right.
\qquad
\psi(x)=
\left\{
\begin{aligned}
&1, &&\text{ if } 0\leq x\leq 1/2,\\
&-1, &&\text{ if }1/2< x\leq 1,\\
&0, &&\text{ otherwise}.
\end{aligned}
\right.
\end{equation*}
By means of dilation and translation, the mother wavelet $\psi(x)$ can result in orthogonal decomposition of the space $L^2(I)$. To this end, we can define the basis functions on level $\ell$ by
\begin{align*}
\psi_{\ell,j}^{\roma}(x):=2^{\frac{\ell-1}{2}}\psi(2^{\ell-1}x-j) \quad \text{ for all }\quad 0\leq j\leq 2^{\ell-1}-1.
\end{align*}
The subspace of level $\ell$ is
\[
W_{\ell}^{\roma}:=\text{span}\{\psi_{\ell,j}^{\roma}:\quad 0\leq j\leq 2^{\ell-1}-1\}.
\]
Note that the subspace $W_{\ell}^{\roma}$ is orthogonal to $W_{\ell'}^{\roma}$ in $L^2(I)$ for any two different levels $\ell\neq \ell'$. We denote by $V_{\ell}^{\roma}$ as the subspace in $L^2(I)$ up to level $\ell$, which is defined by
\[
V_{\ell}^{\roma}:=\oplus_{m\leq\ell}W_{m}^{\roma}.
\]
Due to the orthogonality of the subspaces $W_{\ell}$ on different levels, there holds
\[
V_{\ell+1}^{\roma}=V_{\ell}^{\roma}\oplus_{L^2(I)} W_{\ell+1}^{\roma}.
\]
Consequently, it yields the hierarchical structure of the subspace $V_{\ell}$, namely,
\[
V_{0}^{\roma}\subset V_{1}^{\roma}\subset \cdots\subset V_{\ell}^{\roma}\subset V_{\ell+1}^{\roma}\cdots
\]
Furthermore, the following orthogonal decomposition of the space $L^2(I)$ holds
\[
L^2(I)=\oplus_{\ell}W_{\ell}^{\roma}.
\]
\subsection{Hierarchical bases }
Let the level parameter and the mesh size be $\ell$ and $h_{\ell}:=2^{-\ell}$ with $\ell\in \mathbb{N}$, respectively. Then the grid points on level $\ell$ are
\[
x_{\ell,j}=j\times h_{\ell},\qquad 0\leq j\leq 2^{\ell}.
\]
%Let the scaling function and the mother wavelet be given by

%By means of dilation and translation, the mother wavelet $\psi(x)$ can result in orthogonal decomposition of the space $L^2(I)$. To this end,
We can define the basis functions on level $\ell$ by
\begin{equation*}
\psi_{\ell,j}^{\romb}(x)=
\left\{
\begin{aligned}
&1-|x/h_{\ell}-j|, &&\text{ if }  x\in [(j-1)h_{\ell},(j+1)h_{\ell}]\cap [0,1],\\
&0, &&\text{ otherwise.}
\end{aligned}
\right.
%\qquad
%\psi(x)=
%\left\{
%\begin{aligned}
%&1, &&\text{ if } 0\leq x\leq 1/2,\\
%&-1, &&\text{ if }1/2< x\leq 1,\\
%&0, &&\text{ otherwise}.
%\end{aligned}
%\right.
\end{equation*}
Define the set on each level $\ell$ by
\begin{equation*}
B_{\ell}:=\Bigg\{
j\in\mathbb{N}\Bigg|
\begin{aligned}
&j=1,\cdots,2^{\ell}-1, j \,\rm{ is\, odd }, &&\rm{if }\,\ell>0\\
&j=0,1,&&\rm{ if }\,\ell=0
\end{aligned}
\Bigg\}.
\end{equation*}
%Note that the set $B_{\ell}$ has the nesting property, namely, $B_{\ell-1}\subset B_{\ell}$ for all $1<\ell\in\mathbb{N}_{+}$.
%\begin{align*}
%\psi_{\ell,j}(x):=2^{\ell/2}\psi(2^{\ell}x-j) \quad \text{ for all }\quad 0\leq j\leq 2^{\ell-1}-1.
%\end{align*}
The subspace of level $\ell$ is
\[
W_{\ell}^{\romb}:=\text{span}\{\psi_{\ell,j}^{\romb}:\quad j\in B_{\ell}\}.
\]
%The nesting property of the set $B_{\ell}$ leads to the nesting property of the subspace $W_{\ell}$, namely, the subspace $W_{\ell}\subset W_{\ell'}$ for any two different levels $\ell<\ell'$.
We denote $V_{\ell}$ as the subspace in $L^2(I)$ up to level $\ell$, which is defined by the direct sum of subspaces
\[
V_{\ell}^{\romb}:=\oplus_{m\leq\ell}W_{m}^{\romb}.
\]
%Due to the orthogonality of the subspaces $W_{\ell}$ on different levels, there holds
%\[
%V_{\ell+1}=V_{\ell}\oplus_{L^2(I)} W_{\ell+1}.
%\]
Consequently, this yields the hierarchical structure of the subspace $V_{\ell}$, namely,
\[
V_{0}^{\romb}\subset V_{1}^{\romb}\subset \cdots\subset V_{\ell}^{\romb}\subset V_{\ell+1}^{\romb}\cdots
\]
Furthermore, the following hierarchical decomposition of the space $L^2(I)$ holds
\[
L^2(I)=\lim_{\ell\to\infty}\oplus_{m\leq\ell}W_{m}^{\romb}.
\]
Note that one can derive the hierarchical decomposition of the space $L^2(I^d)$ for $d>1$ by means of tensor product. Note further that we will use the subspace $V_{\ell}^{\roma}$ and $V_{\ell}^{\romb}$ to approximate the restriction of the exact solution $u$ on each coarse edge.

In this paper, we will only focus on the convergence analysis of multiscale algorithms, cf. Algorithm \ref{algorithm:wavelet}, based upon the Haar wavelets $V_{\ell}^{\roma}$. The convergence analysis of multiscale algorithms based upon the hierarchical bases  $V_{\ell}^{\romb}$ can be derived similarly.

Throughout this paper, $(\cdot,\cdot)_{T}$ for some domain $T\subset D$ or some edges $T\subset \partial \omega_i$ denotes the inner product in the Hilbert space $L^2(T)$. We use $A\lesssim B$ if $A\leq CB$ for some benign constant that is independent of the multiple scales and high contrast in the coefficient $\kappa$ and the coarse scale mesh size $H$. 

%\begin{remark}
%Alternatively, the subspace $V_{\ell}$ can be expressed by
%\[
%V_{\ell}=\text{span}\{\phi_{\ell,j}:\quad  0\leq j\leq 2^{\ell}-1\}.
%\]
%Here, for $\ell\in \mathbb{N}_{0}$, the function $\phi_{\ell,j}$ is defined by
%\begin{align*}
%\phi_{\ell,j}(x):=2^{\ell/2}\phi(2^{\ell}x-j) \quad \text{ for all }\quad 0\leq j\leq 2^{\ell}-1.
%\end{align*}
%\end{remark}
\begin{proposition}[Approximation properties of the hierarchical space $V_{\ell}^{\roma}$]\label{prop:approx-wavelets}
Let $P_{\ell}$ be $L^2(I)$-orthogonal projection onto $V_{\ell}^{\roma}$ for each level $\ell\geq 0$ and let $s>0$. Then there holds
\begin{align*}
P_{\ell+1} v&=P_{\ell}v+\sum\limits_{j=0}^{2^{\ell}-1}(v,\psi_{\ell+1,j}^{\roma})_{I}\psi_{\ell+1,j}^{\roma}&\text{for all }v\in L^2(I)\\
\|v-P_{\ell}v\|_{L^2(I)}&\lesssim 2^{-s\ell}|v|_{H^s(I)} &\text{for all }v\in H^s(I).
\end{align*}
\end{proposition}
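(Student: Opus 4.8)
The plan is to prove the two assertions in order, starting with the recursive formula for the projection and then using it (together with a telescoping argument) to obtain the approximation rate.

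\textbf{Step 1: the recursion for $P_\ell$.} First I would recall that, by the orthogonal decomposition $V_{\ell+1}^{\roma}=V_{\ell}^{\roma}\oplus_{L^2(I)}W_{\ell+1}^{\roma}$ established above, the $L^2(I)$-orthogonal projection onto $V_{\ell+1}^{\roma}$ splits as the sum of the orthogonal projection onto $V_{\ell}^{\roma}$ and the orthogonal projection onto $W_{\ell+1}^{\roma}$. Since the level-$(\ell+1)$ Haar functions $\{\psi_{\ell+1,j}^{\roma}:0\leq j\leq 2^{\ell}-1\}$ form an orthonormal basis of $W_{\ell+1}^{\roma}$ in $L^2(I)$ (they are pairwise disjointly supported up to measure zero, and the normalization constant $2^{(\ell)/2}$ makes each have unit $L^2$ norm), the projection onto $W_{\ell+1}^{\roma}$ of any $v\in L^2(I)$ is exactly $\sum_{j=0}^{2^{\ell}-1}(v,\psi_{\ell+1,j}^{\roma})_I\,\psi_{\ell+1,j}^{\roma}$. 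Adding the two pieces gives the first identity. This step is essentially bookkeeping once the orthonormality and the direct-sum structure are in hand.

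\textbf{Step 2: the approximation estimate.} For the bound $\|v-P_\ell v\|_{L^2(I)}\lesssim 2^{-s\ell}|v|_{H^s(I)}$, I would proceed in two stages. The core is the single-level estimate: for $v\in H^s(I)$ with $0<s\leq 1$ (and then handled on each subinterval of length $h_\ell=2^{-\ell}$), the error of the orthogonal projection onto piecewise constants on a mesh of size $h$ satisfies $\|v-\Pi_h v\|_{L^2}\lesssim h^{s}|v|_{H^s}$. This is the standard Bramble--Hilbert / Deny--Lions estimate: on a reference interval the quotient space $H^s/\mathbb{R}$ has the Poincaré-type inequality $\|w-\bar w\|_{L^2}\lesssim |w|_{H^s}$, and scaling to an interval of length $h$ produces the factor $h^s$; summing the squared local contributions over the $2^\ell$ subintervals and taking square roots gives the global bound with $h=h_\ell$. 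Observe that $V_\ell^{\roma}$ is precisely the space of piecewise constants on the uniform mesh of size $2^{-\ell}$ (this follows by induction from $V_{\ell+1}^{\roma}=V_\ell^{\roma}\oplus W_{\ell+1}^{\roma}$, since adding the level-$(\ell+1)$ Haar functions to piecewise constants on mesh $2^{-\ell}$ yields piecewise constants on mesh $2^{-(\ell+1)}$), so $P_\ell=\Pi_{h_\ell}$ and the claim follows for $0<s\leq 1$. For $s>1$ one either invokes the same piecewise-constant estimate with the observation that $|v|_{H^s}\geq$ controls $|v|_{H^1}$ up to lower-order terms on the bounded interval, or—more cleanly—uses the recursion from Step 1 to write $v-P_\ell v=\sum_{m>\ell}Q_m v$ where $Q_m$ is the projection onto $W_m^{\roma}$, estimates $\|Q_m v\|_{L^2}\lesssim 2^{-sm}|v|_{H^s}$ by a localized vanishing-moment argument (each $\psi_{m,j}^{\roma}$ is mean-zero, so $(v,\psi_{m,j}^{\roma})_I=(v-c,\psi_{m,j}^{\roma})_I$ for any constant $c$, and choosing $c$ the local average on the support gives the decay), and sums the geometric series $\sum_{m>\ell}2^{-sm}\lesssim 2^{-s\ell}$.

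\textbf{Main obstacle.} The delicate point is the fractional-order case $0<s<1$: justifying the scaled Poincaré inequality $\|w-\bar w\|_{L^2(I_h)}\lesssim h^{s}|w|_{H^s(I_h)}$ with a constant independent of $h$, and then correctly assembling the local Sobolev--Slobodeckij seminorms into the global one. Strictly speaking the $H^s$ seminorm is not additive over subdomains—$\sum_j |v|_{H^s(I_{\ell,j})}^2\leq |v|_{H^s(I)}^2$ holds (the double integral over $I\times I$ dominates the sum of double integrals over $I_{\ell,j}\times I_{\ell,j}$), which is the inequality in the direction we need, so this is a genuine but standard subtlety rather than a real difficulty. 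I would state the single-level estimate as a lemma (citing a standard reference for the fractional Bramble--Hilbert estimate) and keep the assembly explicit only to the extent of noting this one-directional inequality. Everything else—the exact projection recursion, the identification of $V_\ell^{\roma}$ with piecewise constants, and the geometric summation—is routine.
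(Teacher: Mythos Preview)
Your proof is correct, but for the second assertion you take a different route from the paper. The paper argues by operator interpolation: it checks the endpoint $s=0$ (trivially $\|v-P_\ell v\|_{L^2}\leq\|v\|_{L^2}$ by orthogonality) and the endpoint $s=1$ (the residual has mean zero on each dyadic subinterval, so the classical Poincar\'e inequality on intervals of length $2^{-\ell}$ gives $\|v-P_\ell v\|_{L^2}\lesssim 2^{-\ell}|v|_{H^1}$), and then invokes real interpolation to obtain the intermediate exponents $0<s<1$. You instead attack the fractional case head-on via a scaled fractional Poincar\'e/Bramble--Hilbert estimate on each subinterval, which forces you to deal with the subadditivity of the Gagliardo seminorm---a genuine technicality you correctly identify and resolve. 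Both arguments are valid; the paper's interpolation route is shorter and sidesteps all fractional-seminorm bookkeeping, while your direct argument is more self-contained and makes the local mechanism (mean-zero on each cell) explicit at every $s$. Your alternative telescoping argument via $\sum_{m>\ell}\|Q_m v\|_{L^2}$ is also sound and is the standard wavelet-characterization viewpoint. One minor caveat: for $s>1$ the stated rate $2^{-s\ell}$ cannot hold for piecewise-constant approximation (it saturates at $2^{-\ell}$), so your remark about controlling $|v|_{H^1}$ by $|v|_{H^s}$ gives only $2^{-\ell}|v|_{H^s}$, not $2^{-s\ell}|v|_{H^s}$; the paper's own proof has the same limitation, and in the application only $s=1/2$ is used.
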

\begin{proof}
The first assertion can be found in \cite{MR1162107}. To prove the second assertion, define the operator
\[
\mathcal{T}: H^s(I)\to L^2(I) \quad\text{ by }\mathcal{T}v:=v-P_{\ell}v.
\]
Let $s:=0$. Then the $L^2(I)$-orthogonality of $P_{\ell}$ implies
\[
\|\mathcal{T}v\|_{L^2(I)}:=\|v-P_{\ell}v\|_{L^2(I)}\leq\|v\|_{L^2(I)} \quad\text{for all }v\in L^2(I).
\]
Furthermore, let $s:=1$. Since the residual $v-P_{\ell}v$ is orthogonal to $V_{\ell}^{\roma}$. Therefore, we obtain
\[
\int_{j\times 2^{-\ell}} ^{(j+1)\times 2^{-\ell}}(v-P_{\ell}v)\dx=0 \text{ for all } j=0,\cdots 2^{\ell}-1.
\]
Consequently, for all $v\in H^1(I)$, the Poincar\'{e} inequality leads to
\begin{align*}
\|\mathcal{T}v\|_{L^2(I)}^2
&:=\|v-P_{\ell}v\|_{L^2(I)}^2
=\sum\limits_{j=0}^{2^{\ell}}\int_{j\times 2^{-\ell}} ^{(j+1)\times 2^{-\ell}}|v-P_{\ell}v|^2\dx\\
&\lesssim 2^{-2\ell}|v|_{H^1(I)}^2.
\end{align*}
Taking the square root on both sides gives
\begin{align*}
\|\mathcal{T}v\|_{L^2(I)}\lesssim 2^{-2\ell}|v|_{H^1(I)} \quad\text{ for all }v\in H^1(I).
\end{align*}
Finally, the preceding two estimates, together with the interpolation theory, prove the second assertion.
\end{proof}
\section{Edge multiscale methods}\label{sec:edge}

We propose in this section two new multiscale methods based on GMsFEMs. In specific, their multiscale basis functions are defined locally on each coarse neighborhood independently, and thereby they can be calculated in parallel. The first multiscale method utilizes the dominant eigenvectors from the local Stechlov eigenvalue problem as the local multiscale basis functions. The second one uses wavelets to approximate the solution restricted on each coarse edge. To obtain conforming global basis functions, we utilize the Partition of Unity finite element method \cite{melenk1996partition,EFENDIEV2011937}. Its main idea is to seek local multiscale basis functions in each coarse neighborhood which own certain approximation properties to the exact solution restricted on each coarse neighborhood, and to use the fact that the global multiscale basis functions obtained from those local multiscale basis functions by the partition of unity functions inherit these approximation properties.

To this end, we begin with an initial coarse space $V^{\text{init}}_0 = \text{span}\{ \chi_i \}_{i=1}^{N}$.
The functions $\chi_i$ are the standard multiscale basis functions on each coarse element $K\in \mathcal{T}_{H}$ defined by
\begin{alignat}{2} \label{pou}
-\nabla\cdot(\kappa(x)\nabla\chi_i) &= 0  &&\quad\text{ in }\;\;K, \\
\chi_i &= g_i &&\quad\text{ on }\partial K, \nonumber
\end{alignat}
where $g_i$ is affine over $\partial K$ with $g_i(O_j)=\delta_{ij}$ for all $i,j=1,\cdots, N$. Recall that $\{O_j\}_{j=1}^{N}$ are the set of coarse nodes on $\mathcal{T}_{H}$. Next we define the weighted coefficient:
\begin{equation}\label{defn:tildeKappa}
\widetilde{\kappa} =H^2 \kappa \sum_{i=1}^{N}  | \nabla \chi_i |^2.
\end{equation}
%with the multiscale function $\chi_i$ to be defined in \eqref{pou} below.
%Note that the use of $\widetilde{\kappa}$ in the local spectral problem \eqref{eq:spectral} instead of $\kappa$ is due to numerical consideration \cite{EFENDIEV2011937}.
Furthermore, let $\widetilde{\kappa}^{-1}$ be defined by
\begin{equation}\label{eq:inv-tildeKappa}
\widetilde{\kappa}^{-1}(x)=
\left\{
\begin{aligned}
&\widetilde{\kappa}^{-1}, \quad &&\text{ when } \widetilde{\kappa}(x)\ne 0\\
&1, \quad &&\text{ otherwise }.
\end{aligned}
\right.
\end{equation}
The weighted $L^2(D)$ space is
\begin{align*}
L_{{\kappa}^{-1}}^{2}(D):=&\Big\{w:\|w\|_{L^2_{\kappa^{-1}(D)}}^2:=\int_{D}{\kappa}^{-1} w^2\dx<\infty  \Big\}.
%H_{\kappa,0}^{1}(\omega_i):=&\Big\{w: w|_{\partial{\omega_i}}=0\text{ s.t. }\seminormE{w}{\omega_i}^2:=\int_{\omega_i}\kappa |\nabla w|^2\dx<\infty  \Big\}.
\end{align*}
Similarly, we define the following weighted Sobolev spaces with their associated norms: $(L_{\widetilde{\kappa}^{-1}}^{2}(D),\|\cdot\|_{L^2_{\widetilde{\kappa}^{-1}(D)}})$.

Note that there are many other alternatives for the partition of unity functions besides using the multiscale basis functions \eqref{pou}, e.g., one can utilize the flat top type of partition of unity functions proposed in \cite{Griebel.Schweitzer:2000}.

\subsection{Edge Spectral Multiscale Finte Element Method}\label{subsec:edgeSpectral}

The first new multiscale method, coined as the edge spectral multiscale method (ESMsFEM), is inspired by the recent results derived in \cite{GL18}. To obtain a good approximation space for the solution $u$ in \eqref{eqn:pde}, one only needs to derive a good local approximation space on each coarse neighborhood $\omega_i$ to $u|_{\omega_i}$ according to the main theory of Partition of Unity Finite Element Method \cite{melenk1996partition}. In that paper, the restriction $u|_{\omega_i}$ is split into three components, each of which is approximated by local  multiscale basis functions with proved convergence rate. Since one of the components is of $\mathcal{O}(H)$, this part is negligible and is removed from Algorithm \ref{algorithm:spectral}.
\begin{algorithm}[H]
\begin{tabular}{l l}
\specialrule{.2em}{.1em}{.1em}
\textbf{Input}:&Coarse neighborhood $\omega_i$ and its total number $N$; the number of multiscale basis functions \\&
$\ell_i\in \mathbb{N}_{+}$; the partition of unity function $\chi_i$;\\
\textbf{Output}:& Multiscale solution $u_{\text{ms}}^{\text{ES}}$.\\
\specialrule{.2em}{.1em}{.1em}
1.& Solve for the Steklov eigenvalue problem and reorder the eigenvalues non-decreasingly. \\
&Seek $(\lambda_{j}^{\ti}, v_{j}^{\ti})\in \mathbb{R}\times H^1_{\kappa}(\omega_i)$ such that\\&
$%\label{eq:steklov}
\begin{cases}
-\nabla\cdot(\kappa\nabla v_{j}^{\ti})  = 0 &\quad\text{in} \, \, \, \omega_i,\\
\kappa\frac{\partial}{\partial n}v_{j}^{\ti}=\lambda_{j}^{\ti} v_{j}^{\ti}&\quad\text{ on } \partial \omega_i.
\end{cases}
$\\
%The corresponding weak formulation is:
%$
%A\xi_j=\lambda_j M_{E}\xi_j \text{ with }
%$
%\\&
%$
 %\displaystyle A = [a_{mn}] = \int_{\omega_i} \kappa\nabla \phi_m \cdot \nabla \phi_n\dx \quad\text{ and }%\quad
 %\displaystyle M_E = [s_{mn}] = \int_{\partial\omega_i} \phi_m \cdot\phi_n\dx .
%$\\&
2.& Solve one local problem.\\&
$
\left\{
\begin{aligned}
-\nabla\cdot(\kappa\nabla v^{i})&=\frac{\widetilde{\kappa}}{\int_{\omega_i}\widetilde{\kappa}\dx} \quad&&\text{ in } \omega_i,\\
-\kappa\frac{\partial v^{i}}{\partial n}&=|\partial\omega_i|^{-1}\quad&&\text{ on }\partial \omega_i.
\end{aligned}
\right.
$
\\
3.& Build global multiscale space.\\&
$
V_{\text{off}}^{\rm ES}  := \text{span} \{\chi_i v^i,\chi_i v_{k}: \,  \, 1 \leq i \leq N,\,\,\, 1 \leq k \leq \ell_i-1\}.
$\\
4. &Solve for \eqref{eq:multiscale} by Conforming Galerkin method in $V_{\text{off}}^{\rm ES} $ to obtain  $u_{\text{ms}}^{ES}$.
\\[0cm]
%\item[Step 4.] Postprocessing on each coarse element $T$ to add the source term $f$:
%\begin{equation*}%\label{eq:1-basis}
%\left\{
%\begin{aligned}
%-\nabla\cdot(\kappa\nabla v_{T})&=f \quad&&\text{ in } T,\\
%v_{T}&=0\quad&&\text{ on }\partial T.
%\end{aligned}
%\right.
%\end{equation*}
%Then we correct the edge-based multiscale solution coarse element wisely by
%\[
%u_{\text{ms}}^{ES}|_{T}=u_{\text{ms}}^{ES}|_{T}+v_{T}.
%\]
\specialrule{.2em}{.1em}{.1em}
\end{tabular}
\caption{Edge Spectral Multiscale Finte Element Method (ESMsFEM)}
\label{algorithm:spectral}
\end{algorithm}
Algorithm \ref{algorithm:spectral} proceeds as follows. Recall that $N$ denotes the total number of coarse nodes in the coarse mesh $\mathcal{T}_{H}$ and $\omega_i$ is the coarse neighborhood for the ith coarse node. $\chi_i$ is the partition of unity function defined on $\omega_i$, cf. \eqref{pou}. Let $1<\ell_i\in \mathbb{N}_{+}$ be the number of local multiscale basis functions on this coarse neighborhood $\omega_i$. Among them, the first $\ell_i-1$ are the dominant modes from the local Steklov eigenvalue problem, cf. Step 1. The last one arises from one specific local solver defined in Step 2. In Step 3, the global multiscale space $V_{\text{ms}}^{\text{ES}}$ is defined with the help of the partition of unity functions $\{\chi_i\}_{i=1}^{N}$. Then the multiscale solution $u_{\text{ms}}^{\text{ES}}$ is obtained by solving  \eqref{eq:multiscale} in the global multiscale space $V_{\text{ms}}^{\text{ES}}$.

\subsection{Wavelet-based Edge Multiscale Finite Element Method}\label{subsec:wavelet-based edge}

Motivated by \cite{GL18}, the local multiscale basis functions restricted on $\partial\omega_i$, which can approximate $u|_{\partial\omega_i}$ plays a vital role in approximating the solution $u\in V$ in \eqref{eqn:pde} efficiently. In view that $u|_{\partial\omega_i}\in H^{s}(\partial\omega_i)$ for some positive constant $s\geq 1/2$ and the approximation properties of the Haar wavelets, cf. Proposition \ref{prop:approx-wavelets}, the $\kappa$-harmonic functions with the Haar wavelets as the local boundary conditions lend themselves to excellent candidates for the local multiscale basis functions. Combining with the Partition of Unity Finite Element Methods and the conforming Galerkin approximation, this results in a new multiscale method, which is named as the Wavelet-based Edge Multiscale Finite Element Method (WEMsFEM), cf. Algorithm \ref{algorithm:wavelet}.
\begin{algorithm}[H]
\caption{Wavelet-based Edge Multiscale Finte Element Method (WEMsFEM)}
\label{algorithm:wavelet}
 \begin{tabular}{l l}
\specialrule{.2em}{.1em}{.1em}
\textbf{Input}:&The level parameter $\ell\in \mathbb{N}$; coarse neighborhood $\omega_i$ and its four coarse edges $\Gamma_{i,k}$ with \\&
    $k=1,2,3,4$, i.e., $\cup_{k=1}^{4}\Gamma_{i,k}=\partial\omega_i$;
    the subspace $V_{\ell,k}\subset L^2(\Gamma_{i,k})$ up to level $\ell$ on each\\& coarse edge $\Gamma_{i,k}$;
    \\
    \textbf{Output}:& Multiscale solution $u_{\text{ms},\ell}^{\text{EW}}$.\\
\specialrule{.2em}{.1em}{.1em}
1. & Denote
    $V_{i,\ell}:=\oplus_{k=1}^{4}V_{\ell,k}.$
    Then the number of basis functions in $V_{i,\ell}$ is $4\times 2^{\ell}=2^{\ell+2}$. \\&
    Denote these basis
    functions as $v_k$ for $k=1,\cdots, 2^{\ell+2}$.\\
2. &Calculate local multiscale basis $\mathcal{L}^{-1}_i (v_k)$ for all $k=1,\cdots,2^{\ell+2}$.\\
&Here, $\mathcal{L}^{-1}_i (v_k):=v$ satisfies:\\
& $%\label{eq:Li}
  \left\{ \begin{aligned}
          \mathcal{L}_i v&:=-\nabla\cdot(\kappa\nabla v)=0&& \mbox{in }\omega_i,\\
          v&=v_k&& \mbox{on }\partial\omega_i.
  \end{aligned}\right.
$\\
%The subspace $\mathcal{L}^{-1}_i (V_{i,\ell})$ denotes \\&the
%solution spaces for $\mathcal{L}_i$ with all possible Dirichlet data in $V_{i,\ell}$. Therefore, the dimension \\&of
%$\mathcal{L}^{-1}_i (V_{i,\ell})$ equals $2^{\ell+2}$.\\
3. &Build global multiscale space. \\&
$
V_{\text{off}}^{\rm EW}  := \text{span} \{\chi_i\mathcal{L}^{-1}_i(v_k),\chi_i v^{i} : \,  \, 1 \leq i \leq N,\,\,\, 1 \leq k \leq  2^{\ell+2}\}.
$\\
4. &Solve for \eqref{eq:multiscale} by Conforming Galerkin method in
$V_{\text{off},\ell}^{\rm EW}$ to obtain $u_{\text{ms},\ell}^{\text{EW}}$.\\[0cm]
%\item[Step 6.] Postprocessing similar to the previous algorithm.
\specialrule{.2em}{.1em}{.1em}
 \end{tabular}
 \end{algorithm}
Algorithm \ref{algorithm:wavelet} proceeds as follows. As in Algorithm \ref{algorithm:spectral}, we first construct the local multiscale basis functions on each coarse neighborhood $\omega_i$. Given level parameter $\ell\in \mathbb{N}$, and the four coarse edges $\Gamma_{i,k}$ with $k=1,2,3,4$, i.e., $\cup_{k=1}^{4}\Gamma_{i,k}=\partial\omega_i$, let $V_{i,k}$ be either the hierarchical bases or Haar wavelets up to level $\ell$ on the coarse edge $\Gamma_{i,k}$. Note that we will drop the superscript for the subspaces $V_{i,j}^{\roma}$ and $V_{i,j}^{\romb}$. Let $V_{i,\ell}:=\oplus_{k=1}^{4}V_{\ell,k}$ be the edge basis functions on $\partial\omega_i$. Then $V_{i,\ell}$ becomes a good approximation space of dimension $2^{\ell+2}$ to the trace of the solution over $\partial \omega_i$, i.e., $u|_{\partial\omega_i}$.

Subsequently, we calculate the $\kappa$-harmonic functions on each coarse neighborhood $\omega_i$ with all possible Dirichlet boundary conditions in $V_{i,\ell}$, and denote the resulting local multiscale space as $\mathcal{L}^{-1}_i (V_{i,\ell})$ in Step 2. Analogous to Algorithm \ref{algorithm:spectral}, we can then define the global multiscale space as $V_{\text{off}}^{\text{EW}}$ and obtain the multiscale solution $u_{\text{ms},\ell}^{\text{EW}}$ in Steps 3 and 4.
\begin{remark}[WEMsFEM is an extension of MsFEM proposed in \cite{MR1455261}]
Let $\ell:=0$, then the local multiscale basis functions $\mathcal{L}_i^{-1}(V_{i,1})$generated in Step 2 in Algorithm \ref{algorithm:wavelet} contain the constant basis function. Since the multiscale basis functions proposed in \cite{MR1455261} serve as the partition of unity functions, cf. \eqref{pou}, Step 3 in Algorithm \ref{algorithm:wavelet} implies that $\{\chi_i\}_{i=1}^{N}\subset V_{\text{off}}^{\text{EW}}$. Consequently, our proposed WEMsFEM is one enrichment of the classical multiscale method (MsFEM).
\end{remark}

\begin{remark}
Let $n$ be the number of fine elements in each coarse element, respectively. For the sake of simplicity, we can take $n:=2^{m}$ for some positive constant $m\in \mathbb{N}$. However, this is not mandatory since we can always use interpolation operator to connect the fine grids $\mathcal{T}_h$ with the mesh grids $h_{\ell}$ for Haar wavelets or hierarchical bases of level $\ell$.
\end{remark}

\begin{remark}[Flexibility of the Wavelet-based edge multiscale basis functions]
The Wavelet-based edge multiscale basis functions can be potentially extended to more general PDEs with heterogeneous coefficients since the only modification is to replace the local operator $\mathcal{L}_i$ in Algorithm \ref{algorithm:wavelet} with the localized PDEs.
\end{remark}
\section{Error estimate}\label{sec:error}
This section is concerned with deriving the convergence rates of Algorithm \ref{algorithm:spectral} and Algorithm \ref{algorithm:wavelet} for elliptic problems with heterogeneous high-contrast coefficients, cf. Problem \eqref{eqn:pde}.

As mentioned earlier, we will focus only on the Haar wavelets in Algorithm \ref{algorithm:wavelet} since the convergence of Algorithm \ref{algorithm:wavelet} using Hierarchical bases can be obtained in a similar manner. In the following, we define the $L^2(\partial\omega_i)$-orthogonal projection $\mathcal{P}_{i,\ell}$ onto the local multiscale space up to level $\ell$: $L^2(\partial\omega_i)\to V_{i,\ell}$ by
\begin{align}\label{eq:projectionEDGE}
\mathcal{P}_{i,\ell}(v):=\sum\limits_{k=0}^{\ell}\sum\limits_{j=1}^{2^{k+2}}(v,\psi_{k,j})_{\partial\omega_i}\mathcal{L}_{i}^{-1}(\psi_{k,j}) \quad \text{ for all }v\in L^2(\partial\omega_i).
\end{align}
Here, we denote $\psi_{k,j}$ for $j=1,\cdots 2^{k+2}$ as the Haar wavelets defined on the four edges of $\omega_i$ of level $k$ and the local operator $\mathcal{L}_i$ is defined as in Algorithm \ref{algorithm:wavelet}.
%\subsection{Error estimate for heterogeneous elliptic problems \eqref{eqn:pde}}

The convergence of the edge spectral basis functions is a direct consequence of the results in \cite{GL18}. One main observation in \cite{GL18} is that the edge spectral basis functions play the critical role in the convergence analysis should the convergence rate of $\mathcal{O}(H)$ be after.
\begin{proposition}[Error estimate for Algorithm \ref{algorithm:spectral} to Problems \eqref{eqn:pde}]\label{prop:edgespectral}
Let Assumption \ref{ass:coeff} hold. Assume that $f\in L^2_{\widetilde{\kappa}^{-1}}(D)\cap L^2_{{\kappa}^{-1}}(D)$ and let $\ell_i\in \mathbb{N}_{+}$ for all $i=1,2,\cdots, N$. Let $u\in V$ be the solutions to Problems \eqref{eqn:pde}. There holds
\begin{equation}\label{eq:spectErr}
\begin{aligned}
\seminormE{u-u_{{\rm ms}}^{{\rm ES}}}{D}&:=\min\limits_{w\in  V_{{\rm off}}^{{\rm ES}}}\seminormE{u-w}{D}\\
&\lesssim H\normLii{f}{D}+\max_{i=1,\cdots,N}
\big\{({H^2\lambda_{\ell_i}^{\ti}})^{-\frac12}\big\}
\|f\|_{L^2_{\kappa^{-1}}(D)}.
\end{aligned}
\end{equation}
\end{proposition}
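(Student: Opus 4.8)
The plan is to reduce the global energy error to a sum of local estimates over the coarse neighborhoods $\omega_i$, exploiting the partition of unity structure, and then to invoke the local decomposition of $u|_{\omega_i}$ into three pieces established in \cite{GL18}. First I would recall from the theory of the Partition of Unity Finite Element Method \cite{melenk1996partition} that if on each $\omega_i$ there is a function $w_i\in V_{\text{off}}^{\rm ES}|_{\omega_i}$ with $\seminormE{u-w_i}{\omega_i}$ controlled, then $w:=\sum_i \chi_i w_i\in V_{\text{off}}^{\rm ES}$ satisfies $\seminormE{u-w}{D}^2\lesssim \sum_i \seminormE{u-w_i}{\omega_i}^2$, where the implied constant depends on the overlap number of the cover $\{\omega_i\}$ and on $H\|\nabla\chi_i\|_{\infty}$, the latter being absorbed into the weight $\widetilde\kappa$ via \eqref{defn:tildeKappa}. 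This converts the claim into producing, for each $i$, a good local approximant from $\text{span}\{\chi_i v^i,\chi_i v_k\}$.

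Next I would recall the three-term splitting from \cite{GL18}: on $\omega_i$ one writes $u|_{\omega_i}=u_i^{(1)}+u_i^{(2)}+u_i^{(3)}$, where $u_i^{(1)}$ solves $\mathcal{L}u_i^{(1)}=f$ with zero Dirichlet data on $\partial\omega_i$ (this is the $\mathcal{O}(H)$ component, controlled by the weighted Friedrichs inequality of Theorem \ref{thm:friedrichs} and giving the $H\normLii{f}{D}$ term after using $f\in L^2_{\widetilde\kappa^{-1}}$), $u_i^{(2)}$ is the particular $\kappa$-harmonic-type correction matching a constant-flux datum — this is exactly the role of $v^i$ from Step 2 of Algorithm \ref{algorithm:spectral}, so $u_i^{(2)}$ is captured by $\chi_i v^i$ up to a term absorbed in $\normLii{f}{D}$ — and $u_i^{(3)}$ is $\kappa$-harmonic in $\omega_i$ with boundary data $u|_{\partial\omega_i}$ minus the traces already accounted for. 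The remaining work is to approximate $u_i^{(3)}$ by a linear combination of the Steklov eigenfunctions $v_k$, $1\le k\le \ell_i-1$.

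For that last step I would use the spectral theory of the Steklov eigenvalue problem in Step 1: the eigenfunctions $\{v_k^{\rm T_i}\}$ form an orthogonal basis (in the appropriate $\kappa$-harmonic/trace inner products) with eigenvalues $0=\lambda_1^{\rm T_i}\le\lambda_2^{\rm T_i}\le\cdots\to\infty$, and for a $\kappa$-harmonic function $v$ with expansion coefficients $c_k$ one has $\seminormE{v-\sum_{k<\ell_i}c_kv_k}{\omega_i}^2=\sum_{k\ge\ell_i}\lambda_k^{\rm T_i}|c_k|^2\le (\lambda_{\ell_i}^{\rm T_i})^{-1}\cdot(\lambda_{\ell_i}^{\rm T_i})^2\sum|c_k|^2/\lambda_k^{\rm T_i}\cdot(\cdots)$ — more cleanly, the truncation error in energy is bounded by $(\lambda_{\ell_i}^{\rm T_i})^{-1}$ times a dual-type norm of the boundary flux, which by the $\kappa$-harmonicity of $u_i^{(3)}$ and an integration by parts reduces to a weighted $L^2$ norm of $f$. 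Tracking the $H$ scalings (the $H^2\lambda_{\ell_i}^{\rm T_i}$ grouping comes from the nondimensionalization already present in \eqref{eq:poinConstant}) yields the factor $(H^2\lambda_{\ell_i}^{\rm T_i})^{-1/2}\|f\|_{L^2_{\kappa^{-1}}(D)}$, and taking the maximum over $i$ gives the stated bound.

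The main obstacle is the bookkeeping that keeps every constant independent of the contrast $\Lambda$: specifically, controlling the flux term for $u_i^{(3)}$ by $\|f\|_{L^2_{\kappa^{-1}}}$ requires the contrast-robust weighted Friedrichs/Poincaré inequality (Theorem \ref{thm:friedrichs}, valid under Assumption \ref{ass:coeff}) and a careful duality argument, since the naive trace and elliptic-regularity estimates degrade with $\Lambda$. Everything else — the PUFEM gluing, the handling of $u_i^{(1)}$ and $u_i^{(2)}$ — is essentially a transcription of the corresponding estimates in \cite{GL18}, so the proof will mostly cite that paper and then isolate the Steklov-truncation estimate as the one genuinely new computation.
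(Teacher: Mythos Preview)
Your strategy is essentially the paper's: the paper's entire proof is the single sentence ``This result follows from Proposition 4.1 and Remark 4.2 in \cite{GL18},'' and what you sketch is a reconstruction of the argument behind that citation (PUFEM gluing, the three-term local splitting of $u|_{\omega_i}$, Steklov truncation for the harmonic piece, contrast-robust Poincar\'e to close). So the approach matches.

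One correction worth making before you flesh this out: the local decomposition from \cite{GL18}, as the paper itself recaps a few lines below the proposition in preparing for Proposition~\ref{prop:wavelet-based}, uses \emph{Neumann} data, not Dirichlet. The $\mathcal{O}(H)$ piece $u^{i,\roma}$ solves $-\nabla\cdot(\kappa\nabla u^{i,\roma})=f-\bar f_i$ with $\kappa\partial_n u^{i,\roma}=0$ on $\partial\omega_i$, the $\kappa$-harmonic piece $u^{i,\romb}$ carries the nontrivial Neumann flux and is the one approximated by Steklov eigenfunctions, and $u^{i,\romc}=v^i\int_{\omega_i}f\,dx$ is the part captured exactly by the basis function $v^i$ from Step~2. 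Your labeling swaps the roles of the second and third pieces and imposes zero Dirichlet on the first; with zero Dirichlet the remainder $u-u_i^{(1)}$ is $\kappa$-harmonic, but then $v^i$ (which is \emph{not} $\kappa$-harmonic, since it has source $\widetilde\kappa/\int\widetilde\kappa$) no longer sits naturally in that remainder, and you also lose the clean pairing between the Neumann flux and the Steklov spectral expansion. Switching to the Neumann splitting fixes this and makes the Steklov-truncation step you outline go through verbatim.
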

\begin{proof}
This result follows from Proposition 4.1 and Remark 4.2 in \cite{GL18}.
\end{proof}

To prove the convergence of the wavelet-based edge multiscale basis functions in Subsection \ref{subsec:wavelet-based edge}, we will first recap several results from \cite{GL18}. The solution $u$ satisfies the following equation
\begin{equation*}%\label{eq:111}
\left\{
\begin{aligned}
-\nabla\cdot(\kappa\nabla u)&=f \quad&&\text{ in } \omega_i,\\
-\kappa\frac{\partial u}{\partial n}&=-\kappa\frac{\partial u}{\partial n}\quad&&\text{ on }\partial \omega_i,
\end{aligned}
\right.
\end{equation*}
which can be split into three parts, namely
\begin{align}\label{eq:decomp}
u|_{\omega_i}=u^{i,\roma}+u^{i,\romb}+u^{i,\romc}.
\end{align}
Here, the three components $u^{i,\roma}$, $u^{i,\romb}$, and $u^{i,\romc}$ are respectively given by
\begin{equation}\label{eq:u-roma1}
\left\{
\begin{aligned}
-\nabla\cdot(\kappa\nabla u^{i,\roma})&=f-\bar{f}_i \quad&&\text{ in } \omega_i\\
-\kappa\frac{\partial u^{i,\roma}}{\partial n}&=0\quad&&\text{ on }\partial \omega_i,
\end{aligned}
\right.
\end{equation}
where $\bar{f}_i:=\int_{\omega_i}f\dx\times\frac{\widetilde{\kappa}}{\int_{\omega_i}\widetilde{\kappa}\dx}$,
\begin{equation*}%\label{eq:111}
\left\{
\begin{aligned}
-\nabla\cdot(\kappa\nabla u^{i,\romb})&=0 \quad&&\text{ in } \omega_i\\
-\kappa\frac{\partial u^{i,\romb}}{\partial n}&=\kappa\frac{\partial u}{\partial n}-\dashint_{\partial\omega_i}\kappa\frac{\partial u}{\partial n}\quad&&\text{ on }\partial \omega_i,
\end{aligned}
\right.
\end{equation*}
and
\[
u^{i,\romc}=v^{i}\int_{\omega_i}f\dx
\]
with $v^i$ being defined in Algorithm \ref{algorithm:spectral}. Clearly,
$u^{i,\romc}$ involves only one local solver.

In the following, we will approximate the second component $u^{i,\romb}$ by the wavelet-based edge multiscale basis functions. On the one hand, notice that $u^{i,\romb}\in H^{1/2}(\partial\omega_i)$, then we can obtain by Proposition \ref{prop:approx-wavelets} combining with a scaling argument, that
\begin{align*}
\|u^{i,\romb}-\mathcal{P}_{i,\ell}u^{i,\romb}\|_{L^2(\omega_i)}
\lesssim \sqrt{H}2^{-\ell/2}\|u^{i,\romb}\|_{H^{1/2}(\partial\omega_i)}.
\end{align*}
On the other hand, since $\kappa\geq 1$, then an application of Theorem \ref{thm:friedrichs} and the Trace inequality leads to
\begin{align*}
\|u^{i,\romb}\|_{H^{1/2}(\partial\omega_i)}\lesssim \|u^{i,\romb}\|_{H^{1}(\omega_i)}\lesssim \seminormE{u^{i,\romb}}{\omega_i}.
\end{align*}
Plugging this estimate into the previous one results in
\begin{align}\label{eq:l2edge}
\|u^{i,\romb}-\mathcal{P}_{i,\ell}u^{i,\romb}\|_{L^2(\omega_i)}
\lesssim\sqrt{H}2^{-\ell/2}\seminormE{u^{i,\romb}}{\omega_i}.
\end{align}
Now we are ready to present the estimate for the second algorithm:
\begin{proposition}[Error estimate for Algorithm \ref{algorithm:wavelet} to Problems \eqref{eqn:pde}]\label{prop:wavelet-based}
Let Assumption \ref{ass:coeff} hold. Assume that $f\in L^2_{\widetilde{\kappa}^{-1}}(D)\cap L^2_{{\kappa}^{-1}}(D)$ and let $\ell\in \mathbb{N}_{+}$. Let $u\in V$ be the solutions to Problems \eqref{eqn:pde}. There holds
\begin{equation}\label{eq:waveletErr}
\begin{aligned}
\seminormE{u-u_{{\rm ms,\ell}}^{{\rm EW}}}{D}&:=\min\limits_{w\in  V_{{\rm off}}^{{\rm EW}}}\seminormE{u-w}{D}\\
&\lesssim H\normLii{f}{D}+\sqrt{H}2^{-\ell/2}
\|f\|_{L^2_{\kappa^{-1}}(D)}.
\end{aligned}
\end{equation}
\end{proposition}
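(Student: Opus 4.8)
The plan is to follow the decomposition strategy from \cite{GL18} that is already set up in the excerpt, reducing the global energy error to a sum of local contributions over the coarse neighborhoods $\omega_i$. Recall that $u|_{\omega_i} = u^{i,\roma} + u^{i,\romb} + u^{i,\romc}$, and that the partition of unity $\{\chi_i\}$ glues local approximations into a global one with the norm equivalence $\seminormE{\sum_i \chi_i w_i}{D}^2 \lesssim \sum_i \seminormE{w_i}{\omega_i}^2$ plus an $L^2$ term weighted by $\widetilde\kappa^{-1}$ coming from the $|\nabla\chi_i|^2$ factor (this is exactly where the constant $H^{-2}$ in $\widetilde\kappa$ and the space $L^2_{\widetilde\kappa^{-1}}$ enter). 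So first I would record the abstract PoU estimate: if on each $\omega_i$ we have a local approximant $w_i$ to $u|_{\omega_i}$, then $\seminormE{u - \sum_i \chi_i w_i}{D} \lesssim \big(\sum_i \seminormE{u|_{\omega_i} - w_i}{\omega_i}^2\big)^{1/2} + \big(\sum_i \|H^{-1}(u|_{\omega_i}-w_i)\|_{L^2_{\widetilde\kappa}(\omega_i)}^2\big)^{1/2}$, or the appropriate variant from \cite{GL18}.

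Next I would choose the local approximant componentwise. For $u^{i,\romc} = v^i \int_{\omega_i} f$, the function $v^i$ is in $V_{\text{off}}^{\rm EW}$ by construction (Step 3 includes $\chi_i v^i$), so this component is reproduced exactly up to the PoU gluing. For $u^{i,\roma}$, which solves \eqref{eq:u-roma1} with zero Neumann data and right-hand side $f - \bar f_i$ having zero weighted average, the Friedrichs/Poincar\'e inequality of Theorem \ref{thm:friedrichs} gives $\seminormE{u^{i,\roma}}{\omega_i} \lesssim H \|f\|_{L^2_{\widetilde\kappa^{-1}}(\omega_i)}$ and a matching $L^2$ bound; this is the source of the $H\normLii{f}{D}$ term and requires no wavelet approximation at all, so I would just take the zero approximant for this piece (or cite the corresponding estimate in \cite{GL18}). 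For $u^{i,\romb}$, which is $\kappa$-harmonic, I would approximate it by $\mathcal{L}_i^{-1}\mathcal{P}_{i,\ell}(u^{i,\romb}|_{\partial\omega_i})$; since $u^{i,\romb}$ is itself $\kappa$-harmonic, the energy-minimizing property of $\kappa$-harmonic extension gives $\seminormE{u^{i,\romb} - \mathcal{L}_i^{-1}\mathcal{P}_{i,\ell}u^{i,\romb}}{\omega_i} \lesssim \seminormE{u^{i,\romb} - \mathcal{L}_i^{-1} w}{\omega_i}$ for any $w$, but the clean way is to note that the energy error is controlled by the $L^2(\omega_i)$ error through an inverse-type or Caccioppoli argument, and then invoke \eqref{eq:l2edge} together with the stability bound $\seminormE{u^{i,\romb}}{\omega_i} \lesssim \seminormE{u}{\omega_i} + (\text{data})$. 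Summing the $\seminormE{u^{i,\romb}}{\omega_i}$ over $i$ with finite overlap and using the global energy bound $\seminormE{u}{D} \lesssim \|f\|_{L^2_{\kappa^{-1}}(D)}$ (again Theorem \ref{thm:friedrichs} on $D$) produces the $\sqrt{H}2^{-\ell/2}\|f\|_{L^2_{\kappa^{-1}}(D)}$ term.

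Assembling: plug $w_i = \chi_i\big(\mathcal{L}_i^{-1}\mathcal{P}_{i,\ell}u^{i,\romb} + v^i\int_{\omega_i}f\big)$ into the PoU estimate, use that the $\roma$ component contributes $H\normLii{f}{D}$, the $\romc$ component contributes nothing, and the $\romb$ component contributes $\sqrt{H}2^{-\ell/2}\|f\|_{L^2_{\kappa^{-1}}(D)}$ after summation, and finally use that $\seminormE{u - u_{\rm ms,\ell}^{\rm EW}}{D} = \min_{w\in V_{\text{off}}^{\rm EW}} \seminormE{u-w}{D}$ by Galerkin orthogonality (C\'ea's lemma is an equality here for the energy norm, or at least a quasi-optimality bound).

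The main obstacle I anticipate is controlling the \emph{energy} norm of $u^{i,\romb} - \mathcal{L}_i^{-1}\mathcal{P}_{i,\ell}u^{i,\romb}$ by the $L^2$-on-the-domain estimate \eqref{eq:l2edge}: \eqref{eq:l2edge} is stated as an $L^2(\omega_i)$ bound, but we need an $H^1_\kappa(\omega_i)$ bound, and naively differentiating a harmonic extension loses a power of $H$ and, worse, may pick up the contrast $\Lambda$. The resolution is that both $u^{i,\romb}$ and its approximant are $\kappa$-harmonic, so their difference is $\kappa$-harmonic with boundary data $u^{i,\romb} - \mathcal{P}_{i,\ell}u^{i,\romb}$; then one uses the fact that the $\kappa$-harmonic extension is the energy minimizer and estimates its energy directly in terms of the $H^{1/2}(\partial\omega_i)$ (equivalently the $H^s$, $s\ge 1/2$) norm of the boundary residual, invoking the trace inequality and the contrast-independent stability already used to derive \eqref{eq:l2edge}. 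Equivalently, one follows \cite{GL18} and writes the energy error of the $\romb$-component directly in terms of $\|u^{i,\romb} - \mathcal{P}_{i,\ell}u^{i,\romb}\|_{H^{1/2}(\partial\omega_i)}$, for which Proposition \ref{prop:approx-wavelets} (with $s=1/2$) plus a scaling argument gives the factor $2^{-\ell/2}$ and the $\sqrt H$ comes from the scaling of $\partial\omega_i$ — exactly the structure of \eqref{eq:l2edge}. Keeping every constant independent of $\Lambda$ throughout this step, as emphasized in \cite{GL18}, is the delicate point; everything else is bookkeeping with the finite-overlap property of the coarse neighborhoods.
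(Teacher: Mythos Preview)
Your proposal is correct and follows essentially the same route as the paper: the paper's own proof is a one-line citation of the trace estimate \eqref{eq:l2edge} together with \cite[Lemma~4.3, Lemma~4.4 and Proposition~4.2]{GL18}, and what you have written is precisely an unpacking of those ingredients --- the PoU gluing estimate, the treatment of $u^{i,\roma}$ via the weighted Poincar\'e/Friedrichs inequality, the exact reproduction of $u^{i,\romc}$, and the wavelet approximation of the harmonic component $u^{i,\romb}$. Your identification of the delicate step (passing from the boundary/$L^2$ estimate \eqref{eq:l2edge} to an energy bound on the $\kappa$-harmonic difference with contrast-independent constants) is exactly what the cited lemmas in \cite{GL18} handle, so there is no gap.
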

\begin{proof}
The result can be obtained by the trace approximation \eqref{eq:l2edge}, \cite[Lemma 4.3, Lemma4.4 and Proposition 4.2]{GL18}.
\end{proof}
\begin{remark}
We can infer from Proposition \ref{prop:wavelet-based} that the energy error can be bounded above by $\mathcal{O}(H)$, should the number of level $\ell:=\lceil-\log_{2}H\rceil$ become. Nevertheless, one can observe from the numerical tests that this algorithm actually is much more accurate than we have proved.
\end{remark}

\section{Numerical tests}\label{sec:numer}
In this section, we present several numerical tests to demonstrate the accuracy of our proposed methods.
In specific, we apply the multiscale algorithms ESMsFEM and WEMsFEM to solve the heterogeneous elliptic problem \eqref{eqn:pde}.

In our experiments, we take the computational domain $D:=[0,1]^d$, for $d=2$ and $3$ and the constant force is employed, namely $f:=1$. Let $\mathcal{T}_{H}$ be a regular quasi-uniform rectangular mesh over $D$ with maximal mesh size $H$ and let $\mathcal{T}_{h}$ be a regular quasi-uniform rectangular mesh over each coarse element $T\in \mathcal{T}_{H}$ with maximal mesh size $h$. Let $h:=\sqrt{2}\times 2^{-9}$.

\begin{figure}[H]
	\centering
	\subfigure[Model 1]{
		\includegraphics[trim={5cm 0 5cm 0},clip,height=3.5cm]{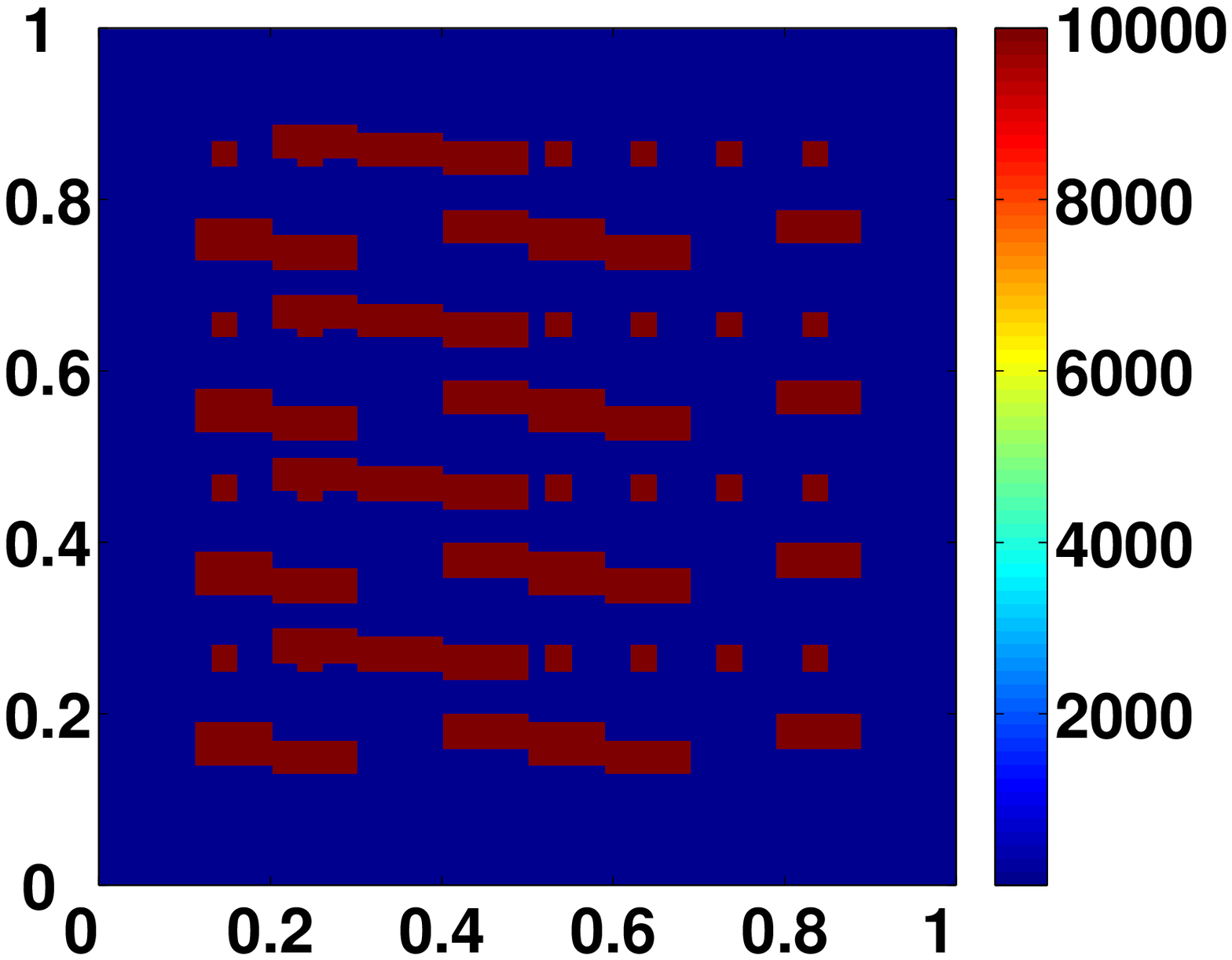}}
	\subfigure[Model 2 ($\log_{10}$ scale)]{
		\includegraphics[trim={5cm 0 5cm 0},clip,height=3.5cm]{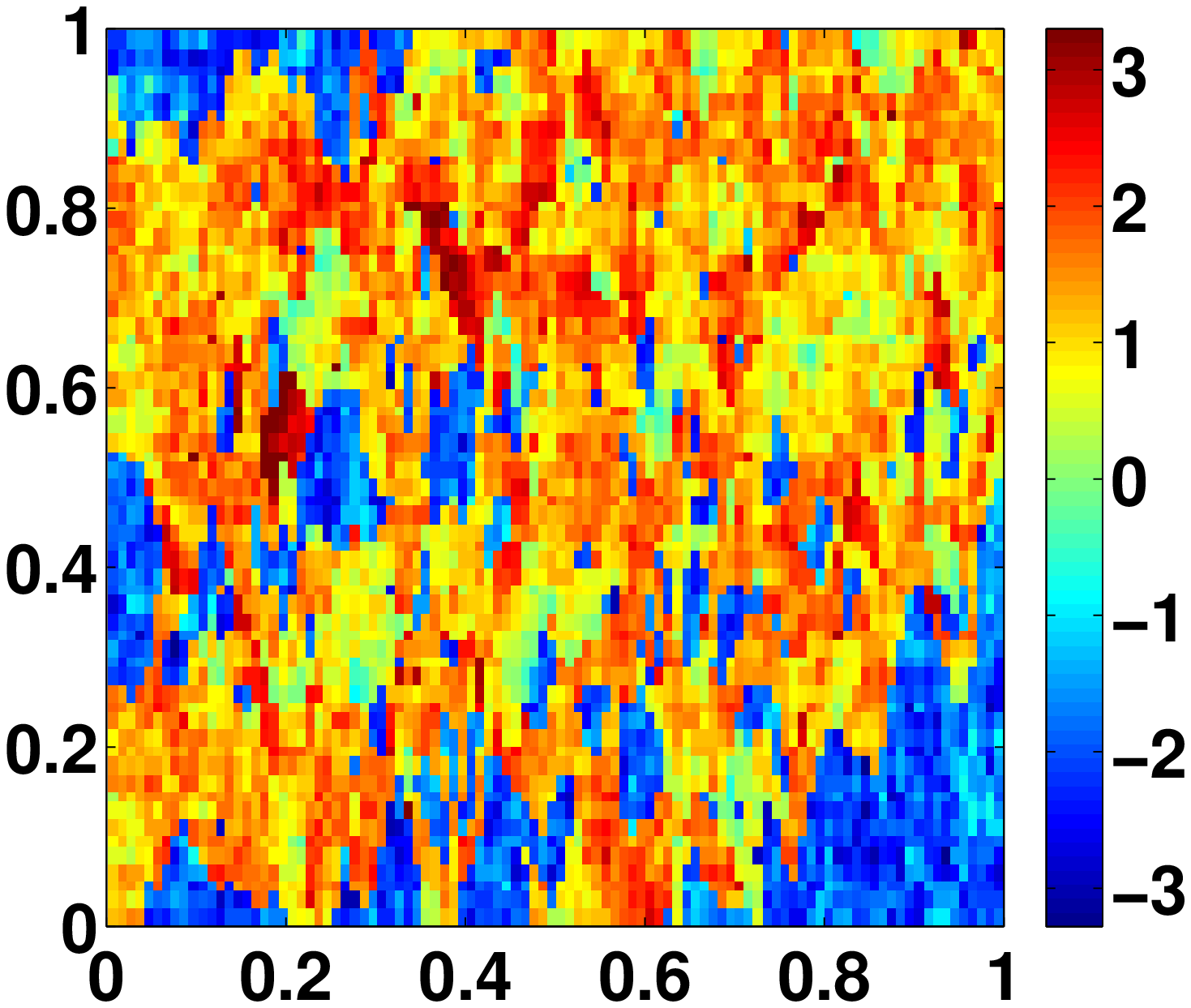}}
	\subfigure[Model 3]{
		\includegraphics[trim={5cm 0 5cm 0},clip,height=3.5cm]{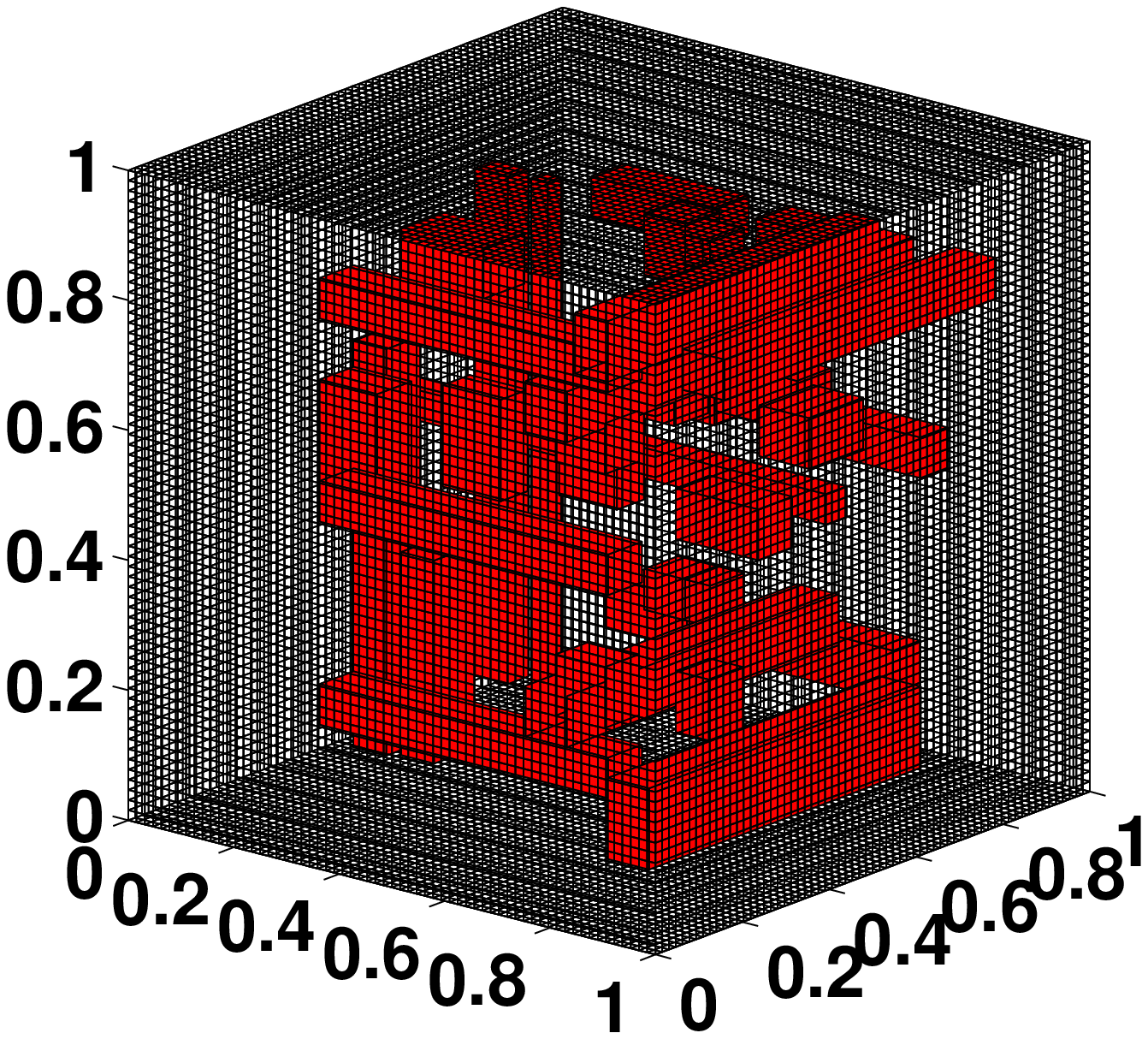}}	
	\caption{Permeability fields $\kappa$}
	\label{fig:model0} %% label for entire figure
\end{figure}

We test our methods for the heterogeneous elliptic problem with the permeability fields $\kappa$ as depicted in Figure \ref{fig:model0}. Note that the fine scale $h$ can resolve these permeability fields. In specific, the second permeability field is the projection of the $85^{\text{th}}$ layer of the tenth SPE comparative solution project (SPE 10), cf. \cite{Aarnes2005257}, onto the fine mesh $\mathcal{T}_{h}$. In this manner, the fine mesh $\mathcal{T}_{h}$ can fit the microscale features in Model 2. In Model 3, we take $\kappa:=1$ in the background $\kappa:=10^4$ in the
red region.

In addition, to quantify the accuracy of the multiscale solutions obtained from our proposed methods, namely, ESMsFEM and WEMsFEM, we define
relative weighted $L^2$ error and energy error as follows:
\begin{equation*}
e_{L^2}=\frac{||\kappa^{1/2}(u_{ms}-u_h)||_{L^2(D)}}{||\kappa^{1/2} u_h||_{L^2(D)}},\quad
e_{H^1}=\sqrt{\frac{a(u_{ms}-u_h,u_{ms}-u_h)}{a(u_h,u_h)}}.
\end{equation*}
Recall that $u_h$ is the fine-scale solution in the finite element space $V_h$ derived from conforming Galerkin scheme, cf. \eqref{eqn:weakform_h}.
\subsection{Numerical tests for WEMsFEM}
%We presents the convergence history for the heterogeneous elliptic problem \eqref{eqn:pde}. We are particularly interested in the influence of coarse
%mesh size $H$, the wavelet level $\ell$ in WEMsFEM and the number of local
%basis $N_b$ in ESMsFEM.

Tables \ref{ta:nu1} and \ref{ta:nu1a} show the
numerical results of WEMsFEM with Haar and hierarchical bases  for the
test model 1. We range the coarse mesh size $H$ from $1/8$ to $1/64$, and the
wavelet level $\ell$ from $0$ to $2$. One can observe
that the accuracy of the WEMsFEM solution can be improved as the coarse mesh size $H$ is decreasing and wavelet level $\ell$ is enlarging. When the wavelet level $\ell=0$, we observe that WEMsFEM based on the Haar wavelets outperforms that based on
the hierarchical bases , while the opposite scenario occurs in the case when $\ell=1,2$.
\begin{table}[htb]
	\centering
%	\begin{adjustbox}{max width=\textwidth}
		\begin{tabular}{|c|c|c|c|c|c|c|}
			\hline
			\multirow{2}{*}{$H$} & \multicolumn{2}{c|}{$\ell=0$} & \multicolumn{2}{c|}{$\ell=1$} & \multicolumn{2}{c|}{$\ell=2$}\tabularnewline
			\cline{2-7}
			& \specialcell{$e_{L^2}$} & \specialcell{$e_{H^1}$} & \specialcell{$e_{L^2}$} & \specialcell{$e_{H^1}$} & \specialcell{$e_{L^2}$} & \specialcell{$e_{H^1}$}\tabularnewline
			\hline
			1/8&13.81 \% &28.37\% &3.52 \%&14.18\% &0.31 \%&4.11\%  \tabularnewline
			\hline
			1/16 & 6.44\% &19.07\% &0.26 \%&4.74\% &0.05\%&2.43\%   \tabularnewline
			\hline
			1/32&2.31\% &13.15\% &0.17\%&3.80\% &0.03\%&1.79\% \tabularnewline
			\hline
			1/64 &0.86\% &8.16\% &0.08\%&2.61\% &0.01\%&0.95\%   \tabularnewline
			\hline
		\end{tabular}
%	\end{adjustbox}
	\caption{Convergence history of WEMsFEM based on Haar wavelets for Problem \eqref{eqn:pde} with Model 1.}
	\label{ta:nu1}	
\end{table}

\begin{table}[htb]
	\centering
	%	\begin{adjustbox}{max width=\textwidth}
	\begin{tabular}{|c|c|c|c|c|c|c|}
		\hline
		\multirow{2}{*}{$H$} & \multicolumn{2}{c|}{$\ell=0$} & \multicolumn{2}{c|}{$\ell=1$} & \multicolumn{2}{c|}{$\ell=2$}\tabularnewline
		\cline{2-7}
		& \specialcell{$e_{L^2}$} & \specialcell{$e_{H^1}$} & \specialcell{$e_{L^2}$} & \specialcell{$e_{H^1}$} & \specialcell{$e_{L^2}$} & \specialcell{$e_{H^1}$}\tabularnewline
		\hline
		1/8& 7.84\% &22.96\% & 2.58\%&11.94\% & 0.20\%&3.59\%  \tabularnewline
		\hline
		1/16 & 6.39\% &19.72\% & 0.73\%&6.62\% & 0.03\%&1.82\%  \tabularnewline
		\hline
		1/32&5.03\% &17.02\% & 0.22\%&4.05\% & 0.01\%&0.85\% \tabularnewline
		\hline
		1/64 &1.50\% &10.40\% & 0.05\%&1.89\% & 0.0024\%&0.42\%  \tabularnewline
		\hline
	\end{tabular}
	%	\end{adjustbox}
	\caption{Convergence history of WEMsFEM based on hierarchical bases  for Problem \eqref{eqn:pde} with Model 1.}
	\label{ta:nu1a}	
\end{table}

\begin{figure}[!ht]
	\centering
	\subfigure[Reference solution]{
		\includegraphics[trim={5cm 0 5cm 0},clip,height=3.5cm]{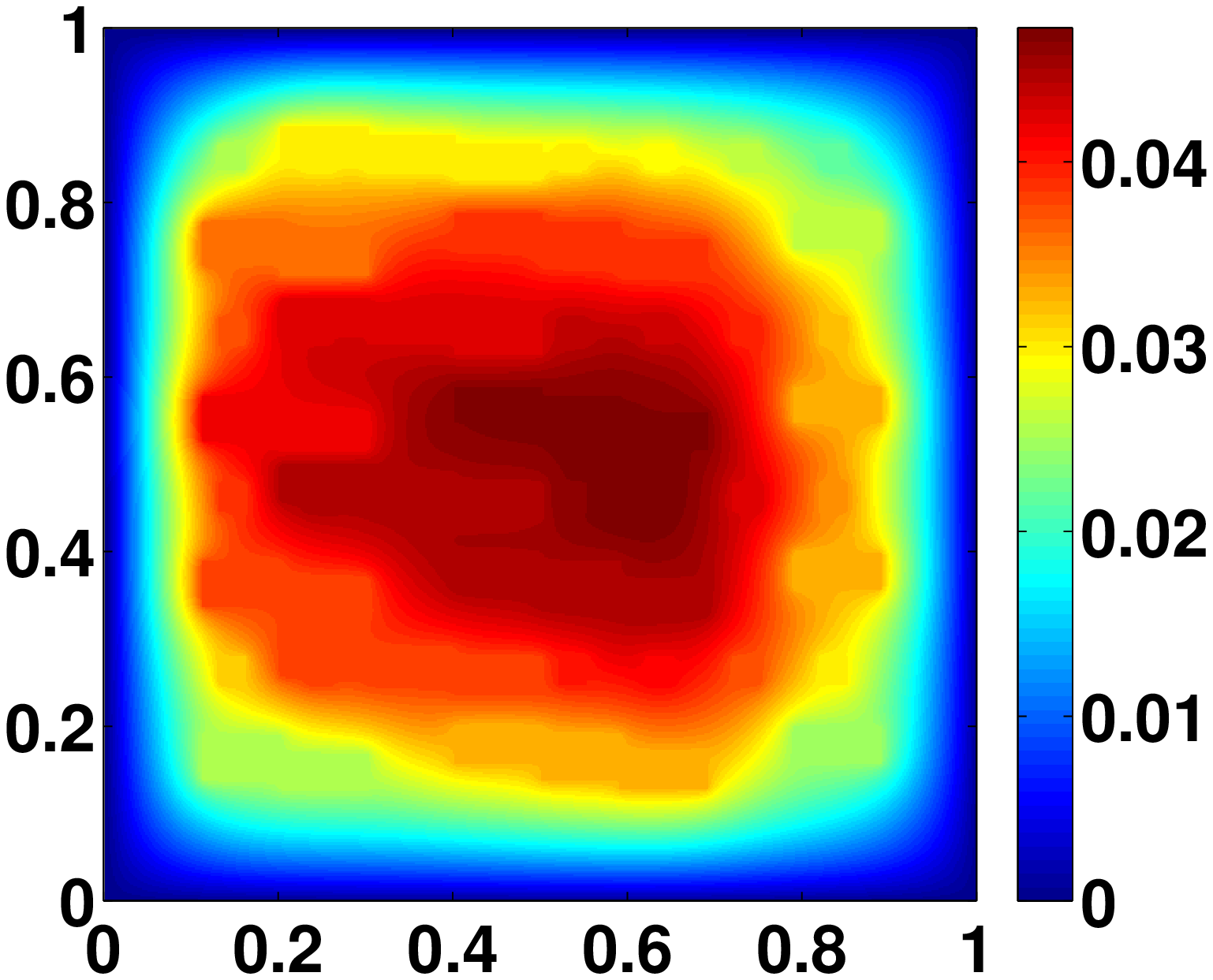}}
	\subfigure[WEMsFEM solution, $\ell=0$]{
		\includegraphics[trim={5cm 0 5cm 0},clip,height=3.5cm]{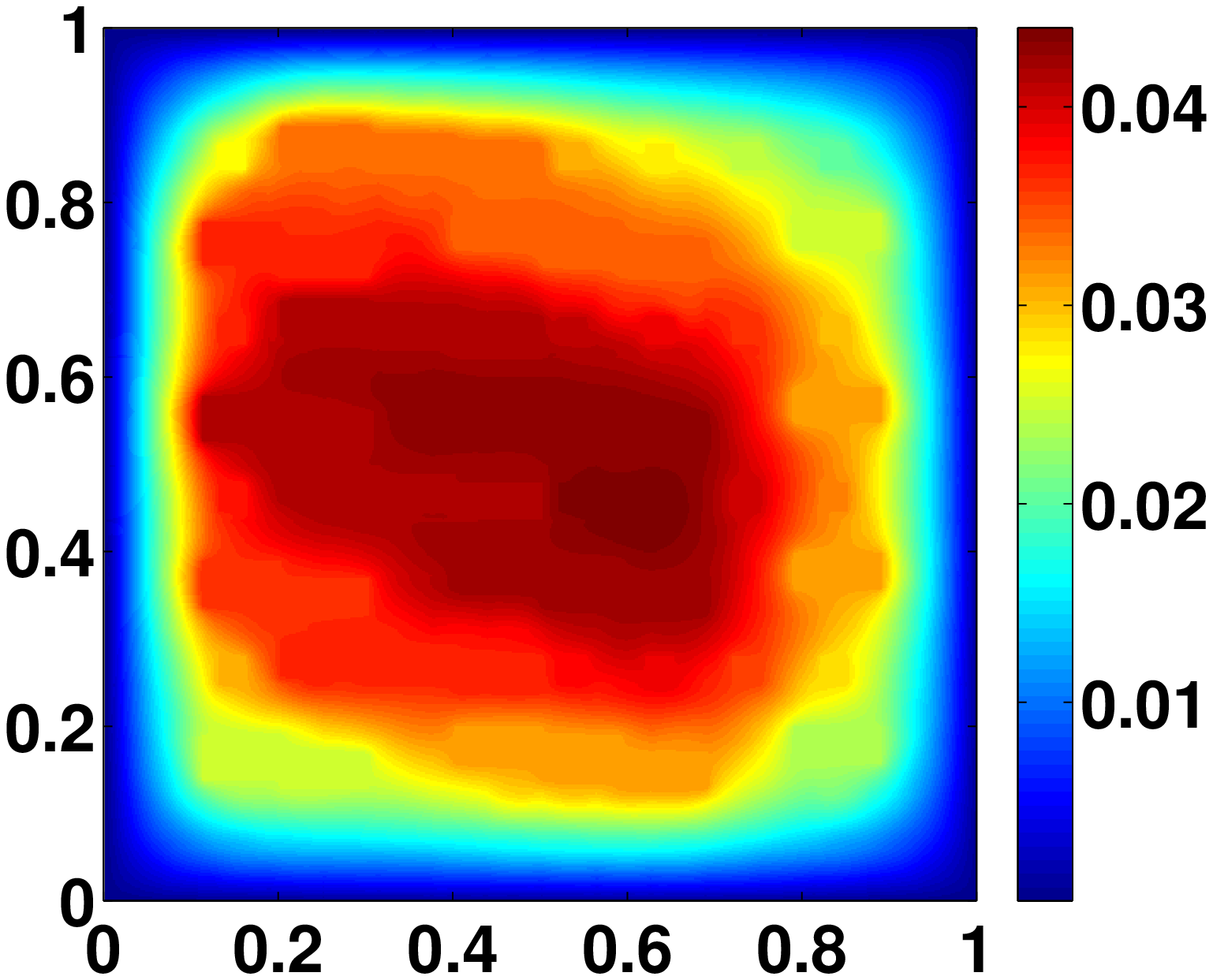}}%%%% first type of basis Hx=1/16;	level=0
	\subfigure[WEMsFEM solution, $\ell=1$]{
		\includegraphics[trim={5cm 0 5cm 0},clip,height=3.5cm]{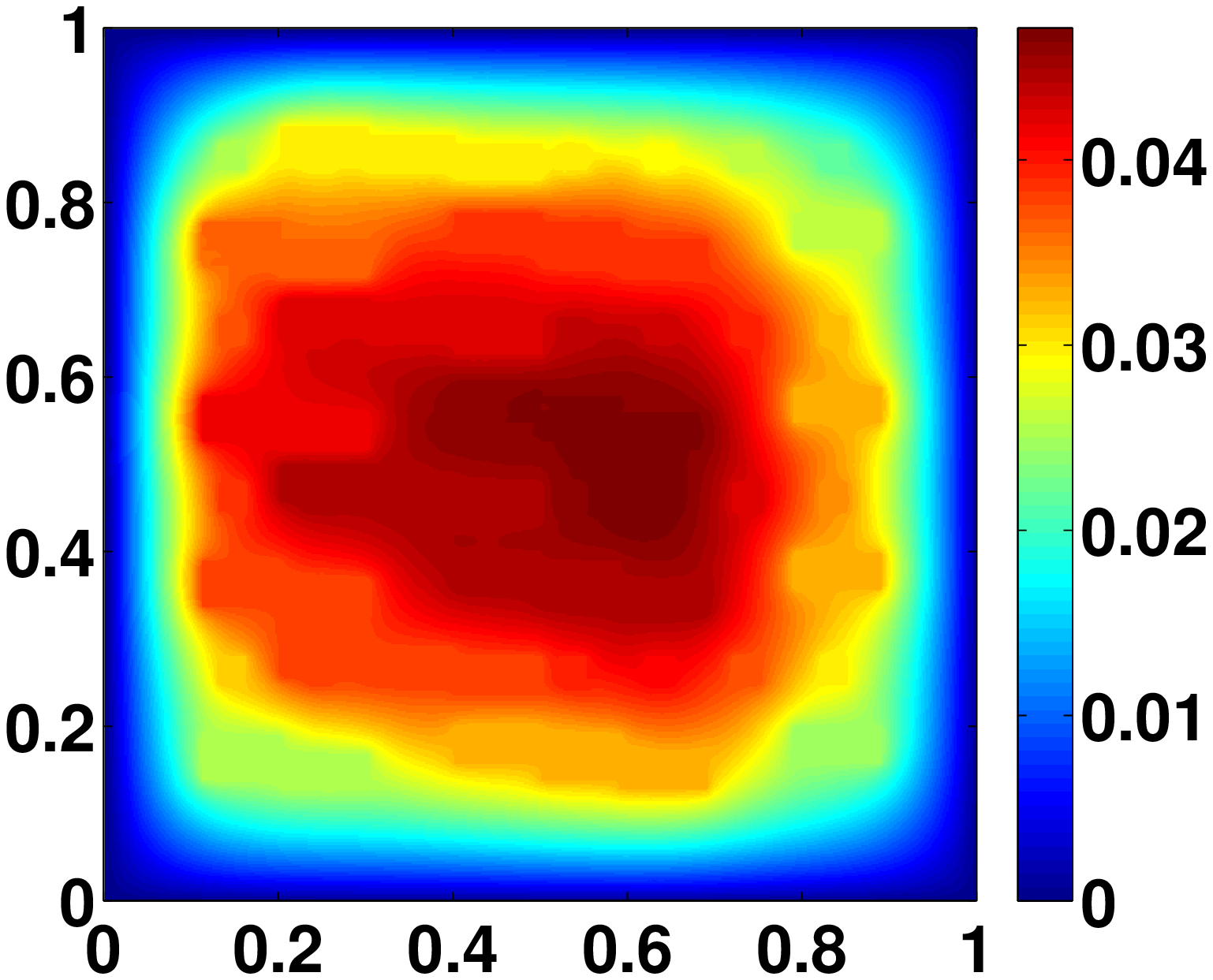}}%%%% first type of basis Hx=1/16;	level=1	
	\caption{The reference solution and the multiscale solutions with $H=1/16$, obtained from WEMsFEM based on Haar wavelets with levels $\ell=0$ and $1$ for Problem \eqref{eqn:pde} with Model 1.}
	\label{fig:2d}
\end{figure}

We depict in Figure \ref{fig:2d} the reference solution, the multiscale solutions solved by WEMsFEM based on Haar wavelets with the level $\ell=0$ and $1$ for Problem \eqref{eqn:pde} with Model 1. When $\ell=0$, the multiscale solution fails to capture the microscale features introduced by the complicated heterogeneity in Model 1. Nevertheless, the multiscale solution with wavelet level $\ell= 1$ is sufficient to generate a good approximation to the reference solution.
\begin{table}[H]
	\centering
%	\begin{adjustbox}{max width=\textwidth}
		\begin{tabular}{|c|c|c|c|c|c|c|}
			\hline
			\multirow{2}{*}{$H$} & \multicolumn{2}{c|}{$\ell=0$} & \multicolumn{2}{c|}{$\ell=1$} & \multicolumn{2}{c|}{$\ell=2$}\tabularnewline
			\cline{2-7}
			& \specialcell{$e_{L^2}$} & \specialcell{$e_{H^1}$} & \specialcell{$e_{L^2}$} & \specialcell{$e_{H^1}$} & \specialcell{$e_{L^2}$} & \specialcell{$e_{H^1}$}\tabularnewline
			\hline
			1/8&3.82\% &41.98\% &2.19\%&34.46\% &1.17\%&25.17\%  \tabularnewline
			\hline
			1/16 & 2.70\% &36.24\% &1.01\%&25.51\% &0.41\%&16.88\%   \tabularnewline
			\hline
			1/32&1.32\% &24.11\% &0.33\%&14.81\% &0.13\%&10.17\% \tabularnewline
			\hline
			1/64 &0.85\% &16.95\% &0.14\%&8.81\% &0.04\%&5.75\%   \tabularnewline
			\hline
		\end{tabular}
%	\end{adjustbox}
	\caption{Convergence history of WEMsFEM based on Haar wavelets for Problem \eqref{eqn:pde} with Model 2.}
	\label{ta:nu2}
\end{table}
Furthermore, we present in Tables \ref{ta:nu2} and \ref{ta:nu2a} the convergence history of WEMsFEM for Problem \eqref{eqn:pde} with Model 2 based on Haar wavelets and hierarchical bases, respectively. Similar convergence behavior as in Tables \ref{ta:nu1} and \ref{ta:nu1a} for Model 1 can be observed. Due to limited computational resources, we only test the WEMsFEM for Model 3 with wavelets level of $\ell=0$. Its convergence history is depicted in Table \ref{ta:nu3}. As expected, the resulted multiscale solutions are not sufficiently accurate.
\begin{table}[!ht]
	\centering
	%	\begin{adjustbox}{max width=\textwidth}
	\begin{tabular}{|c|c|c|c|c|c|c|}
		\hline
		\multirow{2}{*}{$H$} & \multicolumn{2}{c|}{$\ell=0$} & \multicolumn{2}{c|}{$\ell=1$} & \multicolumn{2}{c|}{$\ell=2$}\tabularnewline
		\cline{2-7}
		& \specialcell{$e_{L^2}$} & \specialcell{$e_{H^1}$} & \specialcell{$e_{L^2}$} & \specialcell{$e_{H^1}$} & \specialcell{$e_{L^2}$} & \specialcell{$e_{H^1}$}\tabularnewline
		\hline
		1/8& 4.18\% &48.78\% &1.86 \%&33.20\% & 1.11\%&25.04\%  \tabularnewline
		\hline
		1/16 & 2.65\% &41.20\% & 1.03\%&26.44\% &0.37 \%&16.88\%  \tabularnewline
		\hline
		1/32&1.59\% &30.50\% & 0.28\%&19.50\% & 0.12\%&10.09\% \tabularnewline
		\hline
		1/64 &0.82\% &20.54\% & 0.10\%&9.29\% &0.04 \%&5.59\%  \tabularnewline
		\hline
	\end{tabular}
	%	\end{adjustbox}
	\caption{Convergence history of WEMsFEM based on hierarchical bases  for Problem \eqref{eqn:pde} with Model 2.}
	\label{ta:nu2a}	
\end{table}

\begin{table}[!ht]
	\centering
	%	\begin{adjustbox}{max width=\textwidth}
	\begin{tabular}{|c|c|c|c|c|c|c|}
		\hline
		\multirow{2}{*}{$H$} & \multicolumn{2}{c|}{Haar wavelets}& \multicolumn{2}{c|}{hierarchical bases  }  \tabularnewline
		\cline{2-5}
		& \specialcell{$e_{L^2}$} & \specialcell{$e_{H^1}$}
		& \specialcell{$e_{L^2}$} & \specialcell{$e_{H^1}$} \tabularnewline
		\hline
		1/8&6.85\%& 20.41\% &9.52\%& 28.5\%  \tabularnewline
		\hline
		1/16 &9.04\%&21.50\% &9.45\%& 22.4\% \tabularnewline
		\hline
	\end{tabular}
	%	\end{adjustbox}
\caption{Convergence history of WEMsFEM based on hierarchical bases  and hierarchical bases  with level of $\ell=0$ for Problem \eqref{eqn:pde} with Model 3.}
	\label{ta:nu3}
\end{table}

%\clearpage
\subsection{Numerical tests for ESMsFEM}
In these numerical tests, we take the same number of local multiscale spectral basis functions $N_b\in \mathbb{N}_{+}$ for each coarse neighborhood $\omega_i$, where $i\in\{1,\cdots, N\}$ denotes the coarse grid index. Recall that $N\in \mathbb{N}_{+}$ is the total number of coarse grids in the coarse mesh $\mathcal{T}_{H}$. Let $\Lambda$ be the the minimum of the eigenvalues corresponding to the first eigenfunction defined in Algorithm \ref{algorithm:spectral}, which are not included in the multiscale space $V_{\text{off}}^{\rm ES} $:
\[
\Lambda=\min_{i=1,\cdots,N}\Big\{\lambda_{N_b+1}^{\ti}\Big\}.
\]

We depict the reference solution and the multiscale solutions obtained from the ESMsFEM scheme with $H=1/8$ and $N_b=2$ and $8$ for Model 3 in Figure \ref{fig:3d}.
One can conclude that the multiscale solution from ESMsFEM with $N_b=8$ is sufficient to characterize the microscale features hidden in Model 3.

\begin{figure}[H]
	\centering
	\subfigure[Reference solution]{
		\includegraphics[trim={8cm 1cm 5cm 3cm},clip,height=3cm]{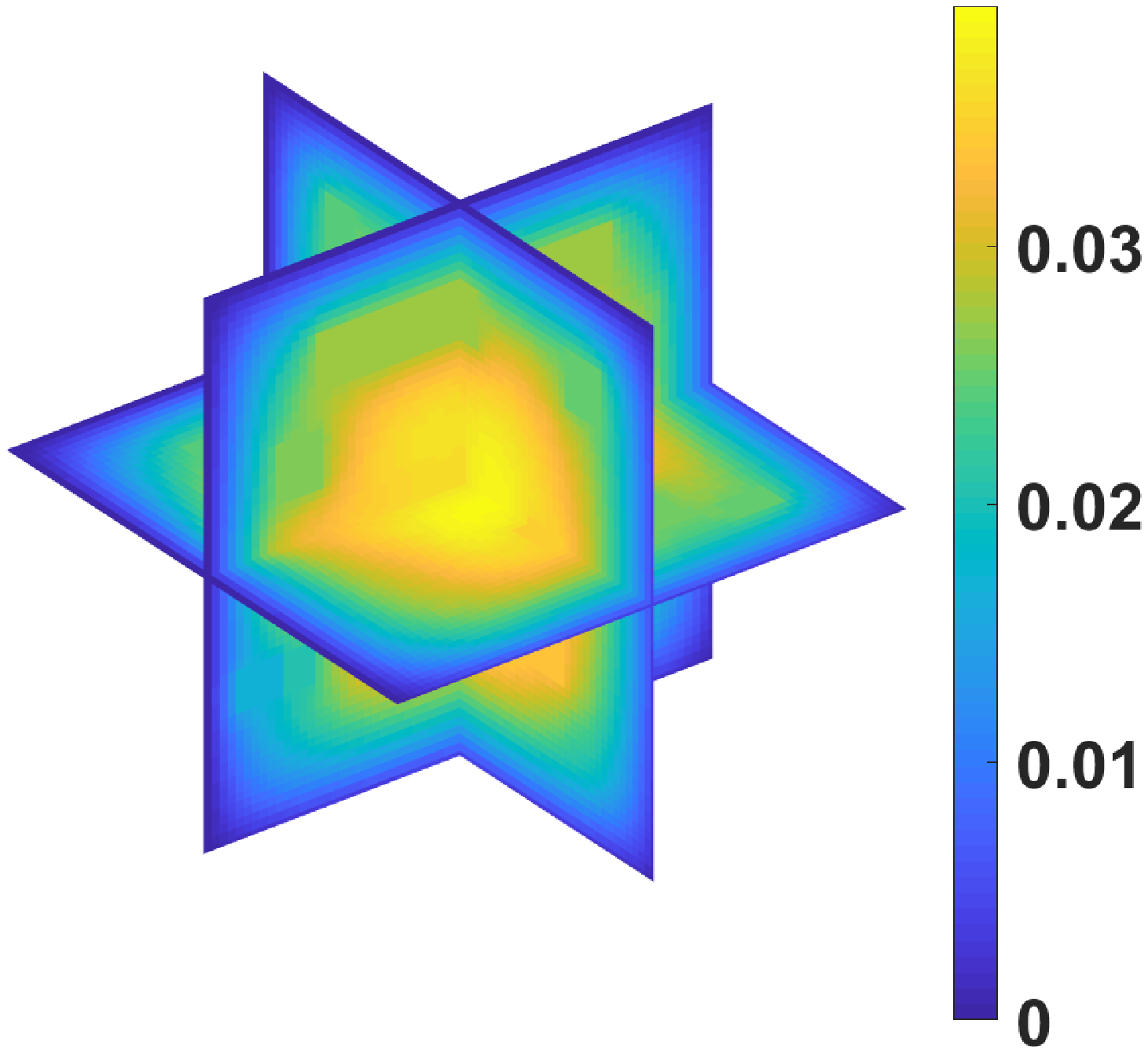}}
	\subfigure[ESMsFEM solution: $N_b=2$.]{
		\includegraphics[trim={8cm 1cm 5cm 3cm},clip,height=3cm]{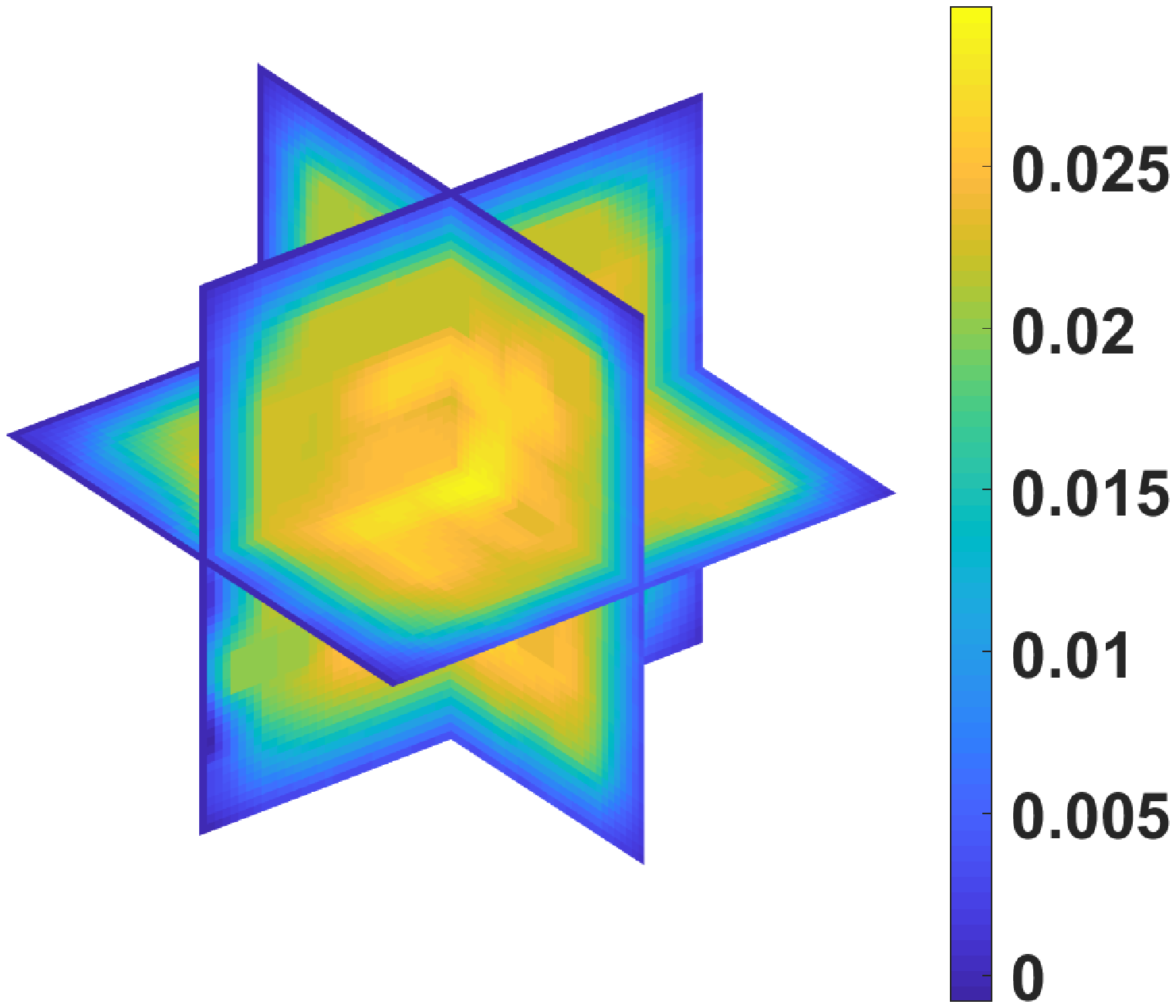}}
	\subfigure[ESMsFEM solution: $N_b=8$.]{
		\includegraphics[trim={8cm 1cm 5cm 3cm},clip,height=3cm]{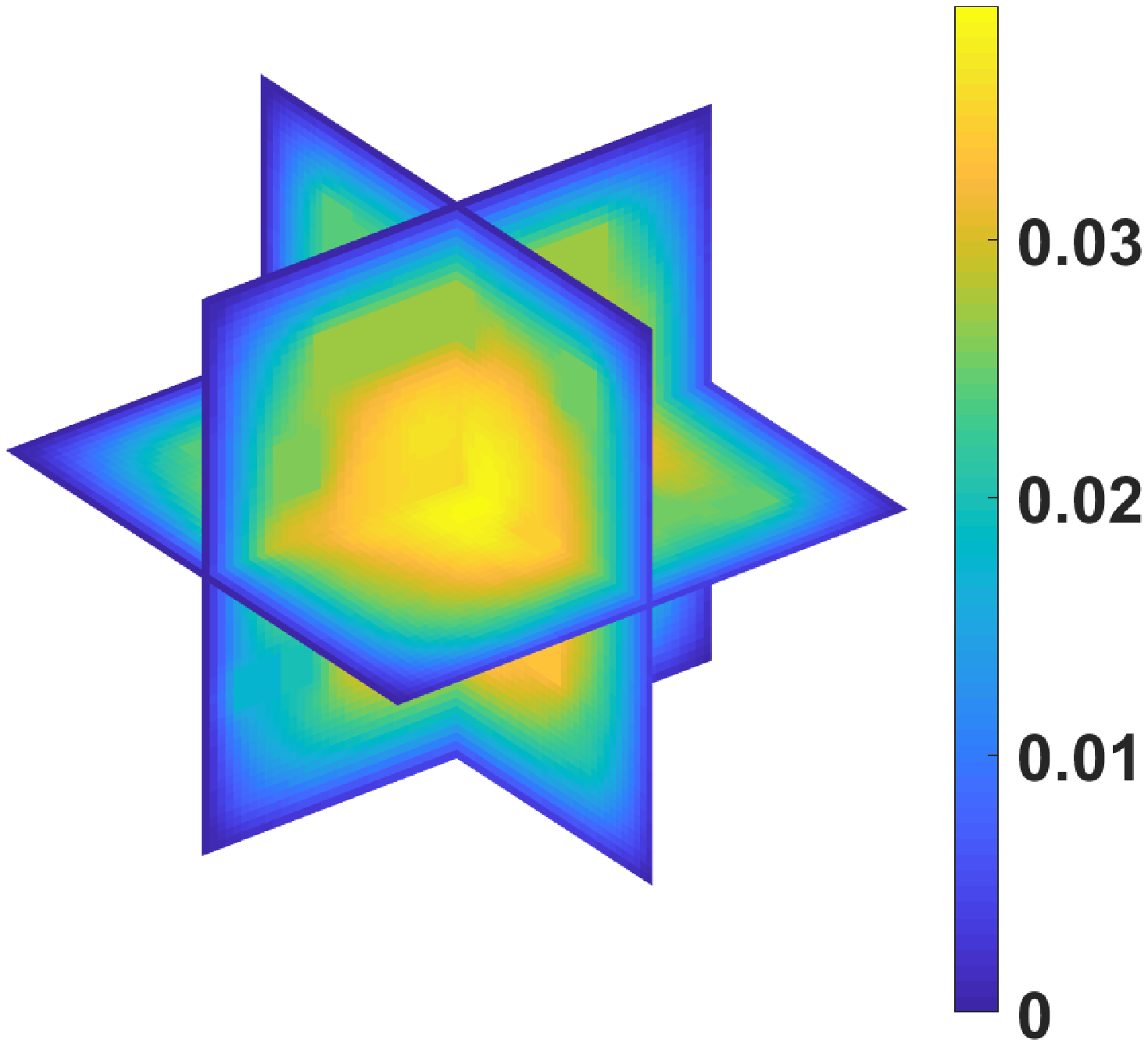}}	
	\caption{The reference solution and the ESMsFEM solutions with $N_b=2$ and $8$ for Model 3 and $H = 1/8$.}
	\label{fig:3d}
\end{figure}

The convergence history of the edge spectral multiscale method (ESMsFEM) for Problem \eqref{eqn:pde} with Models 1,2 and 3 are presented in Tables \ref{ta:convb11}-\ref{ta:convb32}. As proved in Proposition \ref{prop:edgespectral},
the multiscale solution solved by ESMsFEM converges as $\Lambda$ increases and the coarse mesh size $H$ decreases. We take Model 1 for an instance. Let $H:=1/32$, then the $L^2$ relative error decays from $3.82\%$ to $0.02\%$ as the number of bases $N_b$ increases from 2 to 10. As expected, ESMsFEM works better for model 1 compared with model 2 due to the high heterogeneity in model 2, see Tables \ref{ta:convb11}, \ref{ta:convb12}, \ref{ta:convb21} and \ref{ta:convb22}.

\begin{table}[H]%%%%% hx=1/512,contrast=10^4
	\centering \begin{tabular}{|c|c|c|c|c|c|}\hline
		$N_b$ &$\Lambda$  & $e_{L^2}$   & $e_{H^1}$   \tabularnewline\hline
		2&74.7&3.82\%&16.36\%	\tabularnewline\hline
		4&186.4&0.37\%&5.95\%	\tabularnewline\hline
		6&347.6&0.21\%&4.53\%	\tabularnewline\hline
		8&529.4&0.05\%&2.25\%	\tabularnewline\hline
		10&743.2&0.02\%&1.43\%	\tabularnewline\hline%		
	\end{tabular}
	\caption{Convergence history of ESMsFEM for Problem \eqref{eqn:pde} with Model 1 and $H=1/32$.}
	\label{ta:convb11}
\end{table}

\begin{table}[H]%%%%% hx=1/512,contrast=10^4
\centering \begin{tabular}{|c|c|c|c|c|c|}\hline
$N_b$ &$\Lambda$  & $e_{L^2}$   & $e_{H^1}$   \tabularnewline\hline
2&440783.8&1.13\%&9.26\%	\tabularnewline\hline
4&640000.0&0.13\%&3.36\%	\tabularnewline\hline
6&1496214.7&0.08\%&2.74\%	\tabularnewline\hline
8&1537887.8&0.01\%&0.95\%	\tabularnewline\hline
10&2556030.5&0.003\%&0.50\%	\tabularnewline\hline%		
\end{tabular}
\caption{Convergence history of ESMsFEM for Problem \eqref{eqn:pde} with Model 1 and $H=1/64$.}
\label{ta:convb12}
\end{table}

\begin{table}[H]%%%%% hx=1/512,contrast=10^4
	\centering \begin{tabular}{|c|c|c|c|c|c|}\hline
		$N_b$ &$\Lambda$  & $e_{L^2}$   & $e_{H^1}$   \tabularnewline\hline
		2&1558.2&16.33\%&35.72\%	\tabularnewline\hline
		4&3760.2&10.29\%&26.85\%	\tabularnewline\hline
		6&5493.2&8.58\%&24.16\%	\tabularnewline\hline
		8&8195.6&7.33\%&22.45\%	\tabularnewline\hline
		10&9772.0&6.60\%&21.23\%	\tabularnewline\hline%		
	\end{tabular}
	\caption{Convergence history of ESMsFEM for Problem \eqref{eqn:pde} with Model 2 and $H=1/32$.}
	\label{ta:convb21}
\end{table}

\begin{table}[H]%%%%% hx=1/512,contrast=10^4
	\centering \begin{tabular}{|c|c|c|c|c|c|}\hline
		$N_b$ &$\Lambda$  & $e_{L^2}$   & $e_{H^1}$   \tabularnewline\hline
		2&47812.7&7.91\%&26.97\%	\tabularnewline\hline
		4&86609.2&4.02\%&18.00\%	\tabularnewline\hline
		6&116963.8&2.77\%&14.58\%	\tabularnewline\hline
		8&187984.5&2.08\%&12.71\%	\tabularnewline\hline
		10&212675.7&1.63\%&11.46\%	\tabularnewline\hline%		
	\end{tabular}
	\caption{Convergence history of ESMsFEM for Problem \eqref{eqn:pde} with Model 2 and $H=1/64$.}
	\label{ta:convb22}
\end{table}

We present the numerical tests for Model 3 in Tables \ref{ta:convb31} and \ref{ta:convb32} corresponding to different coarse mesh sizes of $H=1/8$ and $H=1/16$. Due to limited computational resources, the case for much finer coarse grid is not performed. Compared with the numerical results for WEMsFEM, cf. Table \ref{ta:nu3}, ESMsFEM performs much better in this case. Nevertheless, ESMsFEM involves solving local eigenvalue problems and thus has much higher computational cost than WEMsFEM.    

\begin{table}[H]%%%%% hx=1/256,contrast=10^4
	\centering \begin{tabular}{|c|c|c|c|c|c|}\hline
		$N_b$ &$\Lambda$   & $e_{L^2}$   & $e_{H^1}$   \tabularnewline\hline
		2&8.4&17.87\%&38.92\%	\tabularnewline\hline
		4&12.6&7.58\%&23.5\%	\tabularnewline\hline
		6&15.3&1.73\%&10.6\%	\tabularnewline\hline
		8&17.9&0.96\%&7.83\%	\tabularnewline\hline
		10&27.0& 0.71\%&6.20\%	\tabularnewline\hline%		
	\end{tabular}
	\caption{Convergence history of ESMsFEM for Problem \eqref{eqn:pde} with Model 3 and $H=1/8$.}
	\label{ta:convb31}
\end{table}

\begin{table}[H]%%%%% hx=1/256,contrast=10^4
	\centering \begin{tabular}{|c|c|c|c|c|c|}\hline
		$N_b$ &$\Lambda$   & $e_{L^2}$   & $e_{H^1}$   \tabularnewline\hline
		2&28.1&13.9\%&26.3\%	\tabularnewline\hline
		4&40.6&1.95\%&11.7\%	\tabularnewline\hline
		6&56.7&0.33\%&4.80\%	\tabularnewline\hline
		8&60.8&0.12\%&3.30\%	\tabularnewline\hline
		10&76.7&0.06\%&2.30\%	\tabularnewline\hline%		
	\end{tabular}
	\caption{Convergence history of ESMsFEM for Problem \eqref{eqn:pde} with Model 3 and $H=1/16$.}
	\label{ta:convb32}
\end{table}

Finally, to emphasize the accuracy of the proposed methods, we provide the performance of
the (oversampling) Multiscale Finite Element Methods (MsFEMs) in Tables \ref{ta:nu1_ov}, \ref{ta:nu2_ov} and \ref{ta:nu3_ov} for the three tested permeability fields Models 1 to 3, respectively. Here, we denote $K^+$ as the oversampled region. In the case that $K^+=K$, there is no oversampling and the local multiscale basis functions are solved on each coarse element $K$, cf. \eqref{pou}; when $K^+=K+\frac{n}{2}$, the local multiscale functions are solved in a larger domain with an extra half coarse element in each direction; when $K^+=K+n$, then the local multiscale basis functions are solved in a much larger domain with one extra coarse element in each direction.

\begin{table}[htb]
	\centering
%	\begin{adjustbox}{max width=\textwidth}
		\begin{tabular}{|c|c|c|c|c|c|c|}
			\hline
			\multirow{2}{*}{$H$} & \multicolumn{2}{c|}{$K^+=K$} & \multicolumn{2}{c|}{$K^+=K+\frac{n}{2}$} & \multicolumn{2}{c|}{$K^+=K+n$}\tabularnewline
			\cline{2-7}
			& \specialcell{$e_{L^2}$} & \specialcell{$e_{H^1}$} & \specialcell{$e_{L^2}$} & \specialcell{$e_{H^1}$} & \specialcell{$e_{L^2}$} & \specialcell{$e_{H^1}$}\tabularnewline
			\hline
			1/8&96.96\% &98.29\% &3.92\%&3235.69\% & 3.81\%&3373.30\%  \tabularnewline
			\hline
			1/16&35.97\% &53.02\% &19.70\%&619.32\% &0.74\%&434.93\%\tabularnewline
			\hline			
			1/32 &18.59\% &36.64\% &16.45\%&90.00\% &9.29\%&82.95\%  \tabularnewline
			\hline
			1/64 &6.24\% &21.22\% &5.09\%&266.35\% &3.69\%&242.37\%  \tabularnewline
			\hline
		\end{tabular}
%	\end{adjustbox}
	\caption{Convergence history of (oversampling) MsFEM for Problem \eqref{eqn:pde} with Model 1.}
	\label{ta:nu1_ov}	
\end{table}

\begin{table}[htb]
	\centering
%	\begin{adjustbox}{max width=\textwidth}
		\begin{tabular}{|c|c|c|c|c|c|c|}
			\hline
			\multirow{2}{*}{$H$} & \multicolumn{2}{c|}{$K^+=K$} & \multicolumn{2}{c|}{$K^+=K+\frac{n}{2}$} & \multicolumn{2}{c|}{$K^+=K+n$}\tabularnewline
			\cline{2-7}
			& \specialcell{$e_{L^2}$} & \specialcell{$e_{H^1}$} & \specialcell{$e_{L^2}$} & \specialcell{$e_{H^1}$} & \specialcell{$e_{L^2}$} & \specialcell{$e_{H^1}$}\tabularnewline
			\hline
			1/8&41.12\% & 88.20\% &13.59\%&171.20\% &12.62\%& 263.41\%  \tabularnewline
			\hline
			1/16 &38.97\% &72.05\% &9.56\%&523.75\% &13.84\%&718.14\%  \tabularnewline
			\hline
			1/32&29.54\% &61.24\% &7.87\%&436.88\% &7.24\%&379.40\%\tabularnewline
			\hline
			1/64 &16.70\% &50.51\% &3.14\%&234.27\% &2.70\%&179.93\%  \tabularnewline
			\hline
		\end{tabular}
%	\end{adjustbox}
	\caption{Convergence history of (oversampling) MsFEM for Problem \eqref{eqn:pde} with Model 2.}
	\label{ta:nu2_ov}	
\end{table}

According to the numerical results, we notice that the numerical solutions solved by (oversampling) MsFEMs result in a relatively decent approximation to the reference solution measured by weighted $L^2$ norm. However, they are far from satisfactory should they be measured in the energy norm. One observes that the utilization of oversampling technique is detrimental to the approximation in energy norm. One possible explanation lies in the nonconforming nature of the multiscale basis functions when the oversampling technique is employed.

\begin{table}[H]
	\centering
	%	\begin{adjustbox}{max width=\textwidth}
	\begin{tabular}{|c|c|c|c|c|c|c|}
		\hline
		\multirow{2}{*}{$H$} & \multicolumn{2}{c|}{$K^+=K$} & \multicolumn{2}{c|}{$K^+=K+\frac{n}{2}$} & \multicolumn{2}{c|}{$K^+=K+n$}\tabularnewline
		\cline{2-7}
		& \specialcell{$e_{L^2}$} & \specialcell{$e_{H^1}$} & \specialcell{$e_{L^2}$} & \specialcell{$e_{H^1}$} & \specialcell{$e_{L^2}$} & \specialcell{$e_{H^1}$}\tabularnewline
		\hline
		1/8&95.21\% & 97.16\% &58.94\%&11.33\% &12.62\%& 410.29\%  \tabularnewline
		\hline
		1/16 &26.42\% &42.76\% &21.41\%&306.23\% &10.89\%&162.74\%  \tabularnewline
		\hline
	\end{tabular}
	%	\end{adjustbox}
	\caption{Convergence history of (oversampling) MsFEM for Problem \eqref{eqn:pde} with Model 3.}
	\label{ta:nu3_ov}	
\end{table}

%\clearpage

\section{Conclusions}\label{sec:conclusion}
We proposed in this paper two new types of edge multiscale method in the framework of the Generalized Multiscale Finite Element Methods (GMsFEMs), with their local multiscale basis functions being defined on each coarse edge. Their theoretical convergence rates were elaborately justified in terms of the number of local multiscale basis functions, the level of the wavelets and the coarse scale mesh size. Especially, the constants appearing in the estimates are independent of the multiple scales and large deviation of values in the heterogeneous coefficients. To verify our theoretical results, extensive numerical performance for elliptic problems with high-contrast heterogeneous coefficients are demonstrated. Our new proposed algorithms opens up a new direction for multiscale methods both theoretically and numerically. Future applications include convection dominated diffusion problems and Helmholtz equations with high frequencies.

\bibliographystyle{abbrv}
\bibliography{reference}
\end{document}